\theoremstyle{plain}
\numberwithin{equation}{section}
\newtheorem*{teorema}{Theorem A}
\newtheorem*{teoremab}{Theorem B}
\newtheorem{lem}[equation]{Lemma}
\newtheorem{prop}[equation]{Proposition}
\newcommand{\trace}{\operatorname{Trace}}
\newcommand{\SL}{\operatorname{SL}}
\newcommand{\PSL}{\operatorname{PSL}}
\theoremstyle{definition}
\newtheorem{rem}[equation]{Remark}
\begin{document}	
\title{On conjugacy classes of $\SL(2,q)$}

\author{Edith Adan-Bante}

\address{Department of Mathematical Science, Northern Illinois University, 
 Watson Hall 320
DeKalb, IL 60115-2888, USA} 

\email{EdithAdan@illinoisalumni.org}

\author{John M. Harris}

\address{Department of Mathematics, University of Southern Mississippi, 
730 East Beach Boulevard, 
Long Beach, MS 39560, USA}

\email{john.m.harris@usm.edu}

\keywords{conjugacy classes, matrices over a finite field,  products of conjugacy classes, special lineal group}

\subjclass{15A33, 20G40, 20E45}

\date{2009}
\begin{abstract}
Let $\SL(2,q)$ be the group of $2\times 2$ matrices with determinant one over a finite field $\mathcal{F}$ of size $q$. 
  We prove that if 
$q$ is even, then the product of any two noncentral conjugacy classes of $\SL(2,q)$ is the union of at least
$q-1$ distinct conjugacy classes of $\SL(2,q)$. On the other hand, if $q>3$ is odd, then the product of any two noncentral conjugacy classes of $\SL(2,q)$ is the union of at least
$\frac{q+3}{2}$ distinct conjugacy classes of $\SL(2,q)$. 
 \end{abstract}
\maketitle

\begin{section}{Introduction}
Let $\mathcal{G}$ be a finite group, $A\in \mathcal{G}$  and $A^{\mathcal{G}}=\{A^B\mid B\in \mathcal{G}\}$ 
be the conjugacy class of $A$ in $\mathcal{G}$.
Let $\mathcal{X}$ be a $\mathcal{G}$-invariant subset of $\mathcal{G}$, i.e. 
$\mathcal{X}^A=\{B^A\mid B\in \mathcal{X}\}=\mathcal{X}$ for all $A\in \mathcal{G}$.
  Then $\mathcal{X}$ can be expressed as a union of 
  $n$ distinct conjugacy classes of $\mathcal{G}$, for some integer $n>0$. Set
 $\eta(\mathcal{X})=n$.
 
 Given any conjugacy classes $A^{\mathcal{G}}$, $B^{\mathcal{G}}$ in $\mathcal{G}$, we can check that the 
 product $A^{\mathcal{G}}B^{\mathcal{G}}=\{XY\mid X\in A^{\mathcal{G}}, Y\in B^{\mathcal{G}}\}$ is a $\mathcal{G}$-invariant
 subset  and 
 thus $A^{\mathcal{G}}B^{\mathcal{G}}$ is the union of $\eta(A^{\mathcal{G}}B^{\mathcal{G}})$ distinct conjugacy classes of 
 $\mathcal{G}$. 
 
 Fix a prime $p$ and an integer $m>0$. 
 Let $\mathcal{F} = \mathcal{F}(q)$ be a field with $q=p^m$ elements and $\mathcal{S}=\SL(2,q)=\SL(2,\mathcal{F})$ be the special linear group,
 i.e. the group of $2\times 2$ invertible matrices over $\mathcal{F}$ with determinant $1$.

 It is proved in \cite{symmetric} that for any integer $n>5$, given any nontrivial
 conjugacy classes $\alpha^{S_n}$ and $\beta^{S_n}$ of the symmetric group $S_n$ 
 of $n$ letters, that is $\alpha,\beta\in S_n\setminus \{e\}$,
  if $n$ is a multiple of two or of three, the product 
 $\alpha^{S_n}\beta^{S_n}$ is the union of at least two distinct conjugacy classes,
 i.e. $\eta(\alpha^{S_n}\beta^{S_n})\geq 2$,
 otherwise the product $\alpha^{S_n}\beta^{S_n}$ is the union of at least three 
 distinct conjugacy classes,
 i.e. $\eta(\alpha^{S_n}\beta^{S_n})\geq 3$.
 A similar result is proved for the alternating group
 $A_n$ in \cite{alternating}. 
 
 Fix a prime $p$ and an integer $m>0$. 
 Let $\mathcal{F} = \mathcal{F}(q)$ be a field with $q=p^m$ elements and $\mathcal{S}=\SL(2,q)=\SL(2,\mathcal{F})$ be the special linear group,
 i.e. the group of $2\times 2$ invertible matrices over $\mathcal{F}$ with determinant $1$.
 Given any non-central conjugacy classes $A^{\mathcal{S}}$, $B^{\mathcal{S}}$ in 
 $\mathcal{S}$, is there any relationship between $\eta(A^{\mathcal{S}}B^{\mathcal{S}})$
  and $q$?   
  
  Arad and Herzog conjectured in \cite{arad} that the product of two nontrivial conjugacy classes 
  is never a 
  conjugacy class in a finite nonabelian simple group.
   Thus, when $q\geq 4$ is even we have that 
  $\mathcal{S}=\SL(2,q)=\PSL(2,q)$ is simple and so we must have that $\eta(A^{\mathcal{S}}B^{\mathcal{S}})>1$ unless $A=I$ or
  $B=I$.  In what follows, we expand and refine this statement.
 
 \begin{teorema} Fix a positive integer $m$.
 Let   $A$ and $B$ be
 matrices in $\mathcal{S}=\SL(2, 2^m)$. Then exactly one of the following holds:
 
 (i)  $A^{\mathcal{S}}B^{\mathcal{S}}= (AB)^{\mathcal{S}}$ and at least one of $A$, $B$ is a scalar matrix.
 
   (ii) $A^{\mathcal{S}}B^{\mathcal{S}}$ is the union of at least
  $2^m-1$ distinct conjugacy classes, i.e. $\eta(A^{\mathcal{S}}B^{\mathcal{S}})$ $\geq 2^m-1$. 
 \end{teorema}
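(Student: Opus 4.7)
The plan is to reduce the estimate on $\eta(A^{\mathcal{S}}B^{\mathcal{S}})$ to an estimate on the number of distinct traces attained by products of conjugates. In characteristic $2$ the centre of $\mathcal{S}=\SL(2,2^m)$ is $\{I\}$, so ``scalar matrix'' means $I$, and conclusion (i) is immediate when $A=I$ or $B=I$. Hence I may assume $A,B\neq I$; write $q=2^m$. A brief analysis of the rational canonical form shows that a non-identity matrix in $\mathcal{S}$ is determined up to conjugacy by its trace, with the one exception that trace $0$ is shared by the identity and the unique unipotent class. Consequently, setting $T=\{\trace(AgBg^{-1}):g\in\mathcal{S}\}$, one has $\eta(A^{\mathcal{S}}B^{\mathcal{S}})\geq|T|$, and it suffices to prove $|T|\geq q-1$.

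I will verify this by case analysis on the types of the classes of $A$ and $B$, each of which is unipotent, split semisimple, or non-split semisimple. The common strategy is: fix a canonical representative of one of $A,B$, parameterize $g$ in a judicious one-parameter subfamily of $\mathcal{S}$, and expand $\trace(AgBg^{-1})$ as an explicit low-degree polynomial in the parameter. A useful auxiliary fact, checked in each of the three class types by a short direct calculation, is that the $(1,1)$-entry of an arbitrary conjugate of a non-central matrix runs over all of $\mathcal{F}$.

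If either $A$ or $B$ is split, diagonalize it, say $B=\operatorname{diag}(b,b^{-1})$ with $b\neq b^{-1}$; then $\trace(AY)=\alpha Y_{11}+b^{-1}\beta$ where $\alpha=\trace(A)$, $\beta=b+b^{-1}\neq 0$, and $Y$ varies in the class of $A$. Since $Y_{11}$ covers $\mathcal{F}$, the trace takes all $q$ values, so $|T|=q$. If $A$ and $B$ are both unipotent, a one-line calculation gives $\trace(AgBg^{-1})=r^{2}$, where $r$ is the $(2,1)$-entry of $g$; bijectivity of Frobenius on $\mathcal{F}$ then gives $|T|=q$. If $A$ is unipotent and $B$ is non-split with trace $\beta\neq 0$, then $\trace(AY)=\beta+Y_{21}$; the entry $Y_{21}$ is non-zero (because a non-split matrix has no $\mathcal{F}$-eigenvector) and in fact runs over all of $\mathcal{F}^{\ast}$, yielding $|T|=q-1$, precisely the target bound.

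The main obstacle is the remaining case in which both $A$ and $B$ are non-split, so have non-zero traces $\alpha,\beta$. Placing both in rational canonical form and restricting to upper-triangular $g=\bigl(\begin{smallmatrix}p&t\\0&p^{-1}\end{smallmatrix}\bigr)$ with $p\neq 0$, a direct computation gives
\[
\trace(AgBg^{-1})\;=\;t^{2}+\bigl(\beta p+\alpha p^{-1}\bigr)\,t+\bigl(p^{2}+p^{-2}+\alpha\beta\bigr).
\]
For generic $p$ this is a quadratic in $t$ with non-zero linear coefficient, which in characteristic $2$ is a $2$-to-$1$ map with image of size only $q/2$. The key trick is to kill the linear term by choosing $p$ so that $\beta p^{2}=\alpha$; such a $p$ exists (and is unique) because Frobenius is a bijection on $\mathcal{F}$. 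With this choice, $\trace(AgBg^{-1})=t^{2}+\text{const}$ and, again by bijectivity of Frobenius, this takes all $q$ values as $t$ ranges over $\mathcal{F}$, so $|T|=q$. Assembling the four cases gives $|T|\geq q-1$, hence $\eta(A^{\mathcal{S}}B^{\mathcal{S}})\geq 2^m-1$, which is (ii).
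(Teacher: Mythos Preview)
Your argument is correct and follows the paper's architecture: reduce to counting traces in $A^{\mathcal S}B^{\mathcal S}$, do a case analysis on the types (split, unipotent, non-split) of $A$ and $B$, and in each case exhibit a family of conjugating matrices making the trace sweep out $\mathcal F$ or $\mathcal F^\ast$. Two cases differ in implementation. For a split factor the paper writes down three explicit one-parameter families (Lemma~\ref{tracescases}); you instead invoke the uniform fact that the $(1,1)$-entry of a conjugate of any non-central matrix covers $\mathcal F$, which is a little cleaner. For non-split $\times$ non-split the paper conjugates by $F(i)=\bigl(\begin{smallmatrix} i+1 & i\\ i & i+1\end{smallmatrix}\bigr)$ to obtain $\trace=vw(i^2+1)$ (Lemma~\ref{tracescaseseven}(iii)); you instead restrict to upper-triangular $g$ and use bijectivity of Frobenius to choose $p$ with $\beta p^{2}=\alpha$, killing the linear term in $t$---a pleasant alternative yielding the same $|T|=q$.

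One write-up slip in your split case: with $B=\operatorname{diag}(b,b^{-1})$ and $Y$ ranging over $A^{\mathcal S}$ you should be computing $\trace(YB)$ rather than ``$\trace(AY)$'', and its value is $\beta\,Y_{11}+\alpha b^{-1}$, not $\alpha\,Y_{11}+b^{-1}\beta$. This does not affect the conclusion, since all that matters is that the coefficient of $Y_{11}$ is $\beta\neq 0$. You might also add a one-line justification that $Y_{21}$ covers $\mathcal F^\ast$ in the unipotent $\times$ non-split case (e.g.\ the form $a^{2}+c^{2}+acw$ is anisotropic since $x^{2}+wx+1$ is irreducible, and scaling by $\lambda$ multiplies it by $\lambda^{2}$, which in characteristic $2$ is an arbitrary nonzero element).
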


 \begin{teoremab}
 Fix an odd prime $p$ and an integer $m>0$ such that $q=p^m>3$.
 Let   $A$ and $B$ be
 matrices in $\mathcal{S}=\SL(2, q)$. Then exactly one of the following holds:
 
 (i)  $A^{\mathcal{S}}B^{\mathcal{S}}= (AB)^{\mathcal{S}}$ and at least one of $A$, $B$ is a scalar matrix.
 
   (ii) $A^{\mathcal{S}}B^{\mathcal{S}}$ is the union of at least
  $\frac{q+3}{2}$ distinct conjugacy classes, i.e. $\eta(A^{\mathcal{S}}B^{\mathcal{S}})\geq \frac{q+3}{2}$.
 \end{teoremab}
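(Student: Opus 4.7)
The plan is to analyze the product $A^{\mathcal{S}}B^{\mathcal{S}}$ through the trace map. For $q$ odd, a conjugacy class of $\mathcal{S}=\SL(2,q)$ is determined by its trace except at the values $\pm 2$, which each split into three classes: the central matrix $\pm I$ together with two (pseudo-)unipotent classes indexed by the square class of the Jordan off-diagonal entry. Setting $T_{A,B}:=\{\operatorname{tr}(XY):X\in A^{\mathcal{S}},\ Y\in B^{\mathcal{S}}\}\subseteq\mathcal{F}$, one therefore has $\eta(A^{\mathcal{S}}B^{\mathcal{S}})\ge |T_{A,B}|$, with an extra contribution whenever a value $\pm 2\in T_{A,B}$ is realized by more than one class. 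If $A$ or $B$ is a scalar matrix then $A^{\mathcal{S}}B^{\mathcal{S}}=(AB)^{\mathcal{S}}$ and we are in case (i); so from now on I assume both $A$ and $B$ are non-scalar.

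The next step is a case analysis by type (split semisimple, non-split semisimple, (pseudo-)unipotent), using the freedom to conjugate $A$ and $B$ separately into canonical forms. In three of the four non-trivial type-combinations the bare trace count already exceeds $(q+3)/2$ for $q\ge 5$. If $A=\operatorname{diag}(\alpha,\alpha^{-1})$ is split semisimple, then $\operatorname{tr}(AY)=(\alpha-\alpha^{-1})Y_{1,1}+\alpha^{-1}\operatorname{tr}(B)$, and a direct parameterization shows that the $(1,1)$-entry of $Y\in B^{\mathcal{S}}$ hits every element of $\mathcal{F}$ (since $B$ is non-scalar), giving $|T_{A,B}|=q$. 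If $A$ is non-split semisimple and $B$ is (pseudo-)unipotent in the form $\varepsilon_B I+x_B E_{12}$, then working on the quadric $A^{\mathcal{S}}=\{X:\operatorname{tr}X=\operatorname{tr}A,\ \det X=1\}$ one finds $\operatorname{tr}(XB)=\operatorname{tr}(A)\,\varepsilon_B+x_B c_3$, where the $(2,1)$-entry $c_3$ of $X$ ranges over $\mathcal{F}^*$ (the point $c_3=0$ is excluded because $A$ is non-split), giving $|T_{A,B}|=q-1$, which is $\ge (q+3)/2$ exactly because $q\ge 5$. If $A$ and $B$ are both non-split semisimple, then completing the square in the linear form $\operatorname{tr}(XY)$ on $A^{\mathcal{S}}$ reduces the question to representing arbitrary values of $\mathcal{F}$ by the anisotropic binary norm form attached to $B$; this form is surjective, giving $|T_{A,B}|=q$.

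The tight case is when $A$ and $B$ are both (pseudo-)unipotent. Writing $A=\varepsilon_A I+N_A$ and $B=\varepsilon_B I+N_B$ with $\varepsilon_A,\varepsilon_B\in\{\pm 1\}$ and $N_A,N_B$ nonzero nilpotent, a standard parameterization of the two nonzero nilpotent $\mathcal{S}$-orbits yields
\[
\operatorname{tr}(XY)\;=\;2\varepsilon_A\varepsilon_B\;-\;x_Ax_B\,(y_1z_3-y_3z_1)^2,
\]
where $(y_1,y_3),(z_1,z_3)\in\mathcal{F}^2\setminus\{0\}$ parameterize the orbits of $N_A$ and $N_B$. Since $s:=y_1z_3-y_3z_1$ ranges over all of $\mathcal{F}$, $s^2$ takes $(q+1)/2$ distinct values, so $|T_{A,B}|=(q+1)/2$ --- exactly one short of $(q+3)/2$. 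I recover the missing class from the fiber $s=0$: there $(z_1,z_3)=\lambda(y_1,y_3)$ for some $\lambda\in\mathcal{F}^*$ and a direct computation gives
\[
XY\;=\;\varepsilon_A\varepsilon_B I\;+\;c(\lambda)\,\widetilde{N},\qquad c(\lambda)=\varepsilon_Bx_A+\varepsilon_Ax_B\lambda^2,
\]
with $\widetilde{N}$ a fixed nonzero nilpotent. As $\lambda$ runs through $\mathcal{F}^*$, $c(\lambda)$ sweeps out the shifted subset $\varepsilon_Bx_A+\varepsilon_Ax_B(\mathcal{F}^*)^2\subseteq\mathcal{F}$ of cardinality $(q-1)/2$; showing that this subset always meets at least two of $\{0\}$, $(\mathcal{F}^*)^2$, $\mathcal{F}^*\setminus(\mathcal{F}^*)^2$ produces at least two of the three classes at trace $2\varepsilon_A\varepsilon_B$ (either the central class $\varepsilon_A\varepsilon_B I$ together with a (pseudo-)unipotent class, or two (pseudo-)unipotent classes of opposite square class), yielding the required extra class.

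The main obstacle is this final subcase: the generic trace count is sharp at $(q+1)/2$ and the extra class must be extracted by square-class bookkeeping within the shifted coset $\varepsilon_Bx_A+\varepsilon_Ax_B(\mathcal{F}^*)^2$. Verifying that this coset cannot be entirely contained in a single one of $\{0\}$, $(\mathcal{F}^*)^2$, or $\mathcal{F}^*\setminus(\mathcal{F}^*)^2$ is the delicate combinatorial point that forces the hypothesis $q>3$ and secures the bound $\eta(A^{\mathcal{S}}B^{\mathcal{S}})\ge(q+3)/2$.
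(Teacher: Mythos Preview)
Your approach is correct and is essentially the paper's: the same case split over the three non-central types (split semisimple, non-split semisimple, and (pseudo-)unipotent), a trace count in each combination, and in the tight unipotent$\times$unipotent case the recovery of one extra class at trace $2\varepsilon_A\varepsilon_B$ by a square-class argument. Your parametrisation of the unipotent classes via rank-one nilpotents and the resulting identity $\operatorname{tr}(XY)=2\varepsilon_A\varepsilon_B-x_Ax_B s^2$ is exactly the content of the paper's Lemma~2.5(iv); your coset $\varepsilon_Bx_A+\varepsilon_Ax_B(\mathcal F^*)^2$ is, after clearing a square factor, the same object as the paper's set $\{rwy^2+tux^2:x,y\neq 0\}$ in Lemma~\ref{oddiii}(i), and the ``delicate combinatorial point'' you isolate is precisely Lemma~\ref{countingelements}(i).

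The one place your route diverges is the non-split$\times$non-split case. The paper (Lemma~\ref{oddiv}) only extracts $(q+1)/2$ traces from a one-parameter family and then, exactly as in the unipotent case, exhibits two inequivalent upper-triangular products at trace $\pm 2$. You instead claim $|T_{A,B}|=q$ directly. That claim is in fact true (for each target trace the resulting quadratic in one coordinate has discriminant a non-constant quadratic, whose $(q+1)/2$ values must meet the $(q+1)/2$ squares), but your stated justification is slightly off: an anisotropic binary form over $\mathcal F_q$ is surjective only onto $\mathcal F_q^{\,*}$, not onto $\mathcal F_q$, so taken literally your norm-form reduction yields $|T_{A,B}|\ge q-1$. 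This is still $\ge (q+3)/2$ for $q\ge 5$, so the proof goes through, but you should either tighten the norm-form argument or be content with $q-1$ here.
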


 Given any group $G$, 
denote by $\min(G)$ the smallest integer in 
the set $\{\eta(a^G b^G)\mid a,b\in  G\setminus {\bf Z}(G)  \}$. In Proposition \ref{optimaleven},
given any integer $m>0$,
we present matrices $A$, $B$ in $\SL(2,2^m)$  such that $\eta(A^{\SL(2,2^m)}B^{\SL(2,2^m)})=2^m-1$ and thus
Theorem A is optimal. Also, given any $q=p^m>3$, where $p$ is an odd prime and $m$ is a positive integer, 
in Proposition \ref{optimal}
we prove that Theorem B is optimal by presenting matrices where $\min(\SL(2,q))$ is attained.  Also, using 
GAP \cite{GAP4}, we can check that $\min(\SL(2,3))=2$ and thus Theorem B cannot apply when $q=3$.

When $q$ is even, $\SL(2,q)=\PSL(2,q)$ is a simple group of Lie type of characteristic 2. 
Hence, if we require that both $A$ and $B$ are not involutions in Theorem A, the conclusion of Theorem A follows from Theorem 2 of \cite{gow}. We thank Rod Gow for pointing this out to us.

 \end{section}

\begin{section}{Proofs}

{\bf Notation.} We will denote with uppercase letters the matrices and 
with lowercase letters the elements in $\mathcal{F}$. 

\begin{rem}\label{conjugacytypes}
We can describe matrix representatives of conjugacy classes in 
$\mathcal{S}=\SL(2,\mathcal{F})$ by four families or types (\cite{gordon}):

(i) 
$\left(
\begin{array}{cc} r & 0\\
         0 & r\end{array} \right)$ where  $r\in \mathcal{F}$ and $r^2=1$,

(ii) $\left(
\begin{array}{cc} r& 0\\
         0 & s\end{array} \right)$ where  $r,s\in \mathcal{F}$ and $rs=1$.

(iii)$\left(
\begin{array}{cc} s & u\\
         0 & s\end{array} \right)$ where $s\in \mathcal{F}$, $s^2=1$ and $u$ is
         either 1 or a non-square element of $\mathcal{F}$, i.e 
         $u\in\mathcal{F}\setminus\{x^2\mid x\in \mathcal{F}\}$ .

(iv)$\left(
\begin{array}{cc}  0& 1\\
         -1& w\end{array} \right)$, where $w=r+r^q$ and $1=r^{1+q}$ for some $r\in \mathcal{E}\setminus \mathcal{F}$, where 
         $\mathcal{E}$ is a quadratic extension of $\mathcal{F}$.
         \end{rem}
       That is, any conjugacy class $A^{\mathcal{S}}$ of $\mathcal{S}$ must contain
       one of the above matrices. 
  
\begin{rem}\label{argumenttypes}
By Lemma 3 of \cite{symmetric}, we have that $A^{\mathcal{S}}B^{\mathcal{S}}=B^{\mathcal{S}}A^{\mathcal{S}}$.
      Thus if we want to prove that given any non-central conjugacy classes $A^{\mathcal{S}}$ 
         and $B^{\mathcal{S}}$ of $\mathcal{S}$, $\eta(A^{\mathcal{S}}B^{\mathcal{S}})\geq n$ for some integer $n$, it suffices to prove that the statement holds for each of the six combinations of conjugacy 
         classes containing matrices of type
         (ii), (iii) and (iv).
\end{rem}

\begin{rem}\label{tracesargument}
Two matrices in the same conjugacy class have the same trace. Thus, if the matrices
do not have the same trace, then they belong to distinct conjugacy classes. 
\end{rem}
\begin{lem}\label{generalcase}
Let $C=\left(
\begin{array}{cc} a & b\\
         c & d\end{array} \right)\in \mathcal{S}$ and $A=\left(
\begin{array}{cc} e & f\\
         g & h\end{array} \right)\in\mathcal{S}$.
Then 
$$A^{C}=C^{-1}AC=\left(
\begin{array}{cc} a(de-bg)+c(df-bh)& b(de-bg)+d(df-bh)\\
         a(-ce+ag)+c(-cf+ah) & b(-ce+ag)+d(-cf+ah)\end{array} \right),
         $$
         and therefore
  
(i)   $${\left(\begin{array}{cc} r & 0\\
         0 & s\end{array} \right)}^C=\left(
\begin{array}{cc} adr-bcs& bd(r-s)\\
         -ac(r-s) &ads-bcr\end{array} \right).
         $$    
   
(ii)  $${\left(\begin{array}{cc} s& u\\
         0 & s\end{array} \right)}^C=\left(
\begin{array}{cc} s+ucd& ud^2\\
         -uc^2 &s-ucd\end{array} \right).
         $$  
         
(iii) $${\left(\begin{array}{cc} 0& 1\\
         -1 & w\end{array} \right)}^C=\left(
\begin{array}{cc} ab+c(d-bw)& b^2+d^2-bdw\\
         -a^2-c^2+acw
          &-ab+d(-c+aw)\end{array} \right).
         $$    
         
\end{lem}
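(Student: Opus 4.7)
The plan is to prove Lemma \ref{generalcase} by direct matrix computation, leveraging at each step the identity $ad-bc=1$ that comes from $C\in\SL(2,\mathcal{F})$.

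First I would write down the inverse explicitly. Since $\det C = ad-bc = 1$, we have $C^{-1} = \bigl(\begin{smallmatrix} d & -b \\ -c & a \end{smallmatrix}\bigr)$, with no denominator to carry around. I would then compute $C^{-1}AC$ in two steps: first the product
\[
C^{-1}A=\begin{pmatrix} d & -b \\ -c & a \end{pmatrix}\begin{pmatrix} e & f \\ g & h \end{pmatrix}=\begin{pmatrix} de-bg & df-bh \\ -ce+ag & -cf+ah \end{pmatrix},
\]
and then multiply this on the right by $C=\bigl(\begin{smallmatrix} a & b \\ c & d \end{smallmatrix}\bigr)$. Writing out the four resulting entries is exactly the general formula stated in the lemma, with the entries of the first row obtained from the first row of $C^{-1}A$ and the entries of the second row from its second row.

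Next I would specialise to the three cases by substituting the appropriate values of $e,f,g,h$ into this general formula. For (i) I plug in $(e,f,g,h)=(r,0,0,s)$; all terms involving $f=g=0$ vanish and the entries reduce immediately to $adr-bcs$, $bd(r-s)$, $-ac(r-s)$, $ads-bcr$. For (ii) I plug in $(e,f,g,h)=(s,u,0,s)$; the off-diagonal entries collapse to $ud^2$ and $-uc^2$ after cancellation, while the diagonal entries $ads+cdu-bcs$ and $-bcs+ads-cdu$ simplify to $s+ucd$ and $s-ucd$ using $ad-bc=1$. For (iii) I plug in $(e,f,g,h)=(0,1,-1,w)$ and read off the four expressions.

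The only thing that could be called an obstacle is bookkeeping: keeping signs straight and remembering to apply $ad-bc=1$ whenever it appears (most visibly in case (ii), where it is essential for obtaining the diagonal form). There is no conceptual content beyond the direct calculation, so I would present the general matrix product once and then carry out the three substitutions briefly, letting the reader verify the cancellations.
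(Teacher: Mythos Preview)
Your proposal is correct and matches the paper's proof essentially line for line: the paper also writes $C^{-1}=\bigl(\begin{smallmatrix} d & -b\\ -c & a\end{smallmatrix}\bigr)$, computes $C^{-1}A$ first, and then multiplies on the right by $C$ to obtain the general formula, leaving the three specialisations to the reader. If anything, you give slightly more detail than the paper by spelling out how $ad-bc=1$ is used in case~(ii).
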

\begin{proof}
Observe that $C^{-1}=\left(\begin{array}{cc} d& -b\\
         -c & a\end{array} \right)$.
         
        Hence
        \begin{equation*}
           \begin{array}{rcl}
            C^{-1}AC& = &\left(\begin{array}{cc} d& -b\\
         -c & a\end{array} \right)\left(
\begin{array}{cc} e & f\\
         g & h\end{array} \right)  \left(
\begin{array}{cc} a & b\\
         c & d\end{array} \right)\\
         & = &
         \left(
\begin{array}{cc} de-bg & df-bh\\
         -ce+ag & -cf+ah\end{array} \right)
         \left(
\begin{array}{cc} a & b\\
         c & d\end{array} \right)\\
         & = &
         \left(
\begin{array}{cc} a(de-bg)+c(df-bh)& b(de-bg)+d(df-bh)\\
         a(-ce+ag)+c(-cf+ah) & b(-ce+ag)+d(-cf+ah)\end{array} \right).
         \end{array}
         \end{equation*}

\end{proof}

\begin{lem}\label{particularcases}
Let $C=\left(
\begin{array}{cc} a & b\\
         c & d\end{array} \right)\in \mathcal{S}$. Then
         
          (i)
         $\trace({\left(\begin{array}{cc} r & 0\\
         0 & s\end{array} \right)}^C\left(\begin{array}{cc} u & 0\\
         0 & v\end{array} \right))= ad(r-s)(u-v)+(us+vr)$.

          (ii)
         $\trace({\left(\begin{array}{cc} r & 0\\
         0 & s\end{array} \right)}^C\left(\begin{array}{cc} t & u\\
         0 & t\end{array} \right))= t(r+s)-ac(r-s)u$.

          (iii)
         $\trace({\left(\begin{array}{cc} r & 0\\
         0 & s\end{array} \right)}^C\left(\begin{array}{cc} 0 & 1\\
         -1 & w\end{array} \right))=
         (ac+bd)(s-r)+w(ads-bcr)$.

          (iv)
         $\trace({\left(\begin{array}{cc} r & u\\
         0 & r\end{array} \right)}^C\left(\begin{array}{cc} t & w\\
         0 & t \end{array} \right))=2rt-uwc^2$.

          (v)
         $\trace({\left(\begin{array}{cc} r & u\\
         0 & r\end{array} \right)}^C\left(\begin{array}{cc} 0 & 1\\
         -1 & s \end{array} \right))=-ud^2-uc^2+s(r-ucd).$

          (vi)
         $\trace({\left(\begin{array}{cc} 0 & 1\\
         -1& w\end{array} \right)}^C  \left(\begin{array}{cc} 0 & 1\\
         -1 & v \end{array} \right))
          =-a^2-b^2-c^2-d^2+
         bdw+acw+v(-ab+d(-c+aw))$.   
         
 \end{lem}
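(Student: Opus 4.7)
The plan is to derive each of the six identities by a direct computation: starting from Lemma \ref{generalcase}, which already provides explicit formulas for the conjugates $X^C$ when $X$ has one of the three canonical forms (ii), (iii), (iv) of Remark \ref{conjugacytypes}, I would multiply on the right by the second matrix $Y$ (again in canonical form) and read off the trace.

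The key computational observation is that for any $2\times 2$ matrices $M=(m_{ij})$ and $N=(n_{ij})$,
\[
\trace(MN)=m_{11}n_{11}+m_{12}n_{21}+m_{21}n_{12}+m_{22}n_{22}.
\]
In each of the six cases the second factor $Y$ has two or three zero entries, so only a handful of summands survive. For example, in case (i), Lemma \ref{generalcase}(i) combined with the formula above immediately gives $(adr-bcs)u+(ads-bcr)v$, and the stated form $ad(r-s)(u-v)+(us+vr)$ is obtained by substituting $bc=ad-1$, which holds because $C\in\mathcal{S}$ has determinant one. The same two-step procedure---plug into Lemma \ref{generalcase}, then simplify using $ad-bc=1$---handles cases (ii) and (iii) (first factor diagonal), (iv) and (v) (first factor of type (iii)), and (vi) (both factors of type (iv)).

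The only real difficulty is bookkeeping. Case (vi), in which both matrices come from the type (iv) family, produces an expression in $a,b,c,d,w,v$ whose expansion carries a large number of cross terms, and one has to be careful to collect and cancel them using $ad-bc=1$ in the right places; cases (iii) and (v), involving the ``anti-diagonal'' type (iv) form, are similarly delicate but much shorter. No new ideas are required: the entire lemma is a mechanical unpacking of Lemma \ref{generalcase} through the trace-of-product formula above, together with the single algebraic reduction $bc=ad-1$ coming from $\det(C)=1$.
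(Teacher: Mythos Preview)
Your proposal is correct and matches the paper's own proof essentially line for line: the paper likewise invokes Lemma \ref{generalcase} in each case, writes out the trace of the product, and in cases (i) and (ii) substitutes $bc=ad-1$ to reach the stated form. One small remark: cases (iii)--(vi) in fact require no use of $ad-bc=1$ at all---the trace read off from Lemma \ref{generalcase} already coincides with the asserted expression---so case (vi) is less delicate than you anticipate.
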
        
 \begin{proof} The result follows by Lemma \ref{generalcase} and 

(i) 
\begin{equation*}
\begin{array}{ll}
\trace({\left(\begin{array}{cc} r & 0\\
         0 & s\end{array} \right)}^C\left(\begin{array}{cc} u & 0\\
         0 & v\end{array} \right)) & =\trace( \left(
\begin{array}{cc} adr-bcs& bd(r-s)\\
         -ac(r-s) &ads-bcr\end{array} \right)\left(\begin{array}{cc} u & 0\\
         0 & v\end{array} \right))\\
&= u(adr-bcs)+v(ads-bcr) \\
&=ad(ur+vs)-bc(us+vr)\\
&=ad(ur+vs)+(1-ad)(us+vr)\\
&=ad(ur+vs-us-vr)+(us+vr)\\
&= ad(u(r-s)-v(r-s))+(us+vr)\\
&=ad(u-v)(r-s)+(us+vr).
\end{array}
\end{equation*}

(ii) 
\begin{equation*}
\begin{array}{ll}
\trace({\left(\begin{array}{cc} r & 0\\
         0 & s\end{array} \right)}^C\left(\begin{array}{cc} t & u\\
         0 & t\end{array} \right))& = \trace(
\left(
\begin{array}{cc} adr-bcs& bd(r-s)\\
         -ac(r-s) &ads-bcr\end{array} \right)\left(\begin{array}{cc} t & u\\
         0 & t\end{array} \right))\\
&=t(adr-bcs)-ac(r-s)u+t(ads-bcr) \\
&=ad(rt+st)-bc(st+rt)-ac(r-s)u\\
&=(ad-bc)t(r+s)-ac(r-s)\\ & =t(r-s)-ac(r-s)u.
\end{array}
\end{equation*}

(iii)
\begin{equation*}
\begin{array}{llll}
\trace({\left(\begin{array}{cc} r & 0\\
         0 & s\end{array} \right)}^C\left(\begin{array}{cc} 0 & 1\\
         -1 & w\end{array} \right)) \\ 
         \;\;\;\;\;\;\;\;\;\;\;\;\;\;\;\;\;\;\;\;\;\;\;=\trace(
\left(
\begin{array}{cc} adr-bcs& bd(r-s)\\
         -ac(r-s) &ads-bcr\end{array} \right)
         \left(\begin{array}{cc} 0 & 1\\
         -1 & w\end{array} \right))\\
\;\;\;\;\;\;\;\;\;\;\;\;\;\;\;\;\;\;\;\;\;\;\;=-bd(r-s)-ac(r-s)+w(ads-bcr)\\
\;\;\;\;\;\;\;\;\;\;\;\;\;\;\;\;\;\;\;\;\;\;\;=(ac+bd)(s-r)+w(ads-bcr).
\end{array}
\end{equation*} 

(iv)
\begin{equation*}
\begin{array}{llll}
\trace({\left(\begin{array}{cc} r & u\\
         0 & r\end{array} \right)}^C\left(\begin{array}{cc} t & w\\
         0 & t \end{array} \right))&= 
         \left(
\begin{array}{cc} r+ucd& ud^2\\
         -uc^2 &r-ucd\end{array} \right)
         \left(\begin{array}{cc} t & w\\
         0 & t \end{array} \right))\\
         &=t(r+ucd)+ w(-uc^2)+t(r-ucd)\\
          & =2rt-uwc^2.
 \end{array}
 \end{equation*}
 
(v)\begin{equation*} 
 \begin{array}{llll}
 \trace({\left(\begin{array}{cc} r & u\\
         0 & r\end{array} \right)}^C\left(\begin{array}{cc} 0 & 1\\
         -1 & s \end{array} \right))&= \trace
    \left(
\begin{array}{cc} r+ucd& ud^2\\
         -uc^2 &r-ucd\end{array} \right)     
\left(\begin{array}{cc} 0 & 1\\
         -1 & s \end{array} \right))\\
         &= -ud^2-uc^2+s(r-ucd)
         \end{array}
         \end{equation*}
         
(vi)
\begin{equation*}
\begin{array}{llcr}
\trace({\left(\begin{array}{cc} 0 & 1\\
         -1& w\end{array} \right)}^C\left(\begin{array}{cc} 0 & 1\\
         -1 & v \end{array} \right))\\
         \;\;\;\;\;\;\;\;\;\;\;\;\;\;\;\;\;\;\;\;\;\;\;= \trace(
         \left(
\begin{array}{cc} ab+c(d-bw)& b^2+d^2-bdw\\
         -a^2-c^2+acw
          &-ab+d(-c+aw)\end{array} \right)
         \left(\begin{array}{cc} 0 & 1\\
         -1 & v \end{array} \right))\\
         \;\;\;\;\;\;\;\;\;\;\;\;\;\;\;\;\;\;\;\;\;\;\;= -b^2-d^2+bdw-a^2-c^2+acw+v(-ab+d(-c+aw))
         \end{array}
         \end{equation*}
\end{proof}
\begin{rem}\label{lineal}
For  any $a, b\in \mathcal{F}$ such that $a\neq 0$, we have that
$\{ax+b\mid x\in \mathcal{F}\}=\mathcal{F}$.
\end{rem} 
\begin{lem}\label{tracescases}
Let $A=\left(\begin{array}{cc} r & 0\\
         0 & s\end{array} \right)$, where $r\neq s$,  and $B$ be any non-central matrix
         in $\mathcal{S}$, i.e. of type (ii), (iii) or (iv) in Remark \ref{conjugacytypes}. 
        Given any $f\in \mathcal{F}$, there exists a matrix $D$ in the product
         $A^{\mathcal{S}} B^{\mathcal{S}}$  such that $\trace(D)=f$. In particular, 
         $\eta(A^{\mathcal{S}}B^{\mathcal{S}})\geq q$ for any non-central matrix 
         $B$. 
\end{lem}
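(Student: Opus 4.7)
The plan is to use the trace formulas from Lemma \ref{particularcases} together with Remark \ref{tracesargument}: matrices with distinct traces lie in distinct conjugacy classes, so if we can realize every element of $\mathcal{F}$ as the trace of some element of $A^{\mathcal{S}}B^{\mathcal{S}}$, we immediately get $\eta(A^{\mathcal{S}}B^{\mathcal{S}})\geq q$. Since $A^C\cdot B$ belongs to $A^{\mathcal{S}}B^{\mathcal{S}}$ for every $C\in\mathcal{S}$, it suffices to exhibit, for each non-central type of $B$ (types (ii)--(iv) by Remark \ref{conjugacytypes}), a one-parameter family of matrices $C=\bigl(\begin{smallmatrix} a & b \\ c & d\end{smallmatrix}\bigr)\in\mathcal{S}$ along which the trace of $A^C B$ sweeps out all of $\mathcal{F}$.

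For type (ii), $B=\operatorname{diag}(u,v)$ with $u\neq v$, Lemma \ref{particularcases}(i) gives $\trace(A^CB)=ad(u-v)(r-s)+(us+vr)$, an affine function of $ad$ with nonzero leading coefficient since $r\neq s$ and $u\neq v$. I would vary $C$ through matrices realizing any prescribed value of $ad\in\mathcal{F}$ (for $k\neq 1$ take $C=\bigl(\begin{smallmatrix} 1 & 1\\ k-1 & k\end{smallmatrix}\bigr)$; for $k=1$ the identity; for $k=0$ the matrix $\bigl(\begin{smallmatrix} 0 & 1\\ -1 & 0\end{smallmatrix}\bigr)$) and apply Remark \ref{lineal}. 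For type (iii), $B=\bigl(\begin{smallmatrix} t & u\\ 0 & t\end{smallmatrix}\bigr)$ with $u\neq 0$, Lemma \ref{particularcases}(ii) gives $\trace(A^CB)=t(r+s)-ac(r-s)u$, an affine function of $ac$ with nonzero leading coefficient; the lower-triangular family $C=\bigl(\begin{smallmatrix} 1 & 0\\ k & 1\end{smallmatrix}\bigr)$ realizes every value of $ac$. For type (iv), $B=\bigl(\begin{smallmatrix} 0 & 1\\ -1 & w\end{smallmatrix}\bigr)$, Lemma \ref{particularcases}(iii) gives $\trace(A^CB)=(ac+bd)(s-r)+w(ads-bcr)$; restricting to upper-triangular $C=\bigl(\begin{smallmatrix} 1 & b\\ 0 & 1\end{smallmatrix}\bigr)$ collapses this to $b(s-r)+ws$, which sweeps out $\mathcal{F}$ as $b$ varies, again by Remark \ref{lineal}.

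There is not really a hard step here: once one chooses the right slice of $\mathcal{S}$ in each case, the trace formulas of Lemma \ref{particularcases} become visibly affine in a single free parameter with nonzero leading coefficient. The only subtlety is making sure the chosen families actually lie in $\SL(2,\mathcal{F})$ (i.e.\ that the determinant condition $ad-bc=1$ is satisfied), which is immediate for the three families above. Combining the three cases yields the desired element $D$ of trace $f$, and the bound $\eta(A^{\mathcal{S}}B^{\mathcal{S}})\geq q$ follows.
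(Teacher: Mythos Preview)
Your proof is correct and follows essentially the same approach as the paper: in each of the three cases you pick a one-parameter family of conjugating matrices in $\mathcal{S}$ so that the trace formula from Lemma~\ref{particularcases} becomes affine in the parameter with nonzero leading coefficient, and then invoke Remark~\ref{lineal}. The paper uses the single family $C(i)=\bigl(\begin{smallmatrix} i & i-1\\ 1 & 1\end{smallmatrix}\bigr)$ for both types (ii) and (iii) (varying $ad$ and $ac$ simultaneously) and the same upper-triangular family you use for type (iv); your choices are minor variants of these (and your case split for type (ii) is in fact unnecessary, since $\bigl(\begin{smallmatrix} 1 & 1\\ k-1 & k\end{smallmatrix}\bigr)$ already lies in $\mathcal{S}$ for every $k\in\mathcal{F}$).
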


 \begin{proof}
 Given $i\in \mathcal{F}$, 
 set
 $C(i)=\left(
\begin{array}{cc} i & i-1\\
         1 & 1\end{array} \right)$. Observe that $C(i)\in\mathcal{S}$ for all $i\in \mathcal{F}$.
         Fix $u,v$ in $\mathcal{F}$ such that $uv=1$. 
         By Lemma \ref{particularcases} (i), we have that
       \begin{equation}\label{case28}
       \trace(
         {\left(\begin{array}{cc} r & 0\\
         0 & s\end{array} \right)}^{C(i)}\left(\begin{array}{cc} u & 0\\
         0 & v\end{array} \right))=(r-s)(u-v)i+(us+vr).
         \end{equation}

\noindent By Lemma \ref{particularcases} ii) we have that 
\begin{equation}\label{case29}
\trace ({\left(\begin{array}{cc} r & 0\\
         0 & s\end{array} \right)}^{C(i)}\left(\begin{array}{cc} t & u\\
         0 & t\end{array} \right))=-(r-s)ui+t(r+s).
\end{equation}

        Now let 
        $E(i)=\left(
\begin{array}{cc} 1 & i\\
         0 & 1\end{array} \right)$ for $i\in\mathcal{F}$. As before, observe that $E(i)\in \mathcal{S}$ for any 
         $i\in \mathcal{F}$. 
         Then by \ref{particularcases} (iii), we have that
         \begin{equation}\label{case210}
         \trace(
         {\left(\begin{array}{cc} r & 0\\
         0 & s\end{array} \right)}^{E(i)}\left(\begin{array}{cc} 0 & 1\\
         -1 & w\end{array} \right))=(s-r)i+ws.
         \end{equation}
         
         Since $(r-s)(u-v)\neq 0$, $(r-s)u\neq0$, and $s-r\neq 0$, the result follows
         from \eqref{case28}, \eqref{case29},\eqref{case210},  Remark \ref{conjugacytypes}, Remark \ref{tracesargument} and 
         Remark \ref{lineal}.
   \end{proof}

\begin{lem}\label{even} Let $\mathcal{F}$ be a field with $2^m$ elements for some integer $m$ and $a\in \mathcal{F}$ with
$a\neq 0$. 
 Given any $\mathcal{H}\subseteq\mathcal{F}$, the set 
$\{ai^2+c\mid i\in \mathcal{H}\}$ has $|H|$ elements. In particular, if $\mathcal{H}=\mathcal{F}$, then the set $\{ai^2+c\mid i\in \mathcal{H}\}$ has $q$ elements.
\end{lem}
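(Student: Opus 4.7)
The plan is to exploit the fact that in characteristic $2$ the squaring map is the Frobenius endomorphism, which is a field automorphism on a finite field. Concretely, I would show that the map $\varphi\colon \mathcal{F}\to\mathcal{F}$ defined by $\varphi(i)=ai^2+c$ is injective; restricting an injection to $\mathcal{H}$ yields an image of cardinality exactly $|\mathcal{H}|$, and both claims of the lemma follow immediately.

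For injectivity, I would argue as follows. Suppose $\varphi(i)=\varphi(j)$ for some $i,j\in\mathcal{F}$. Then $ai^2+c=aj^2+c$, so $a(i^2-j^2)=0$. Since $a\neq 0$ and $\mathcal{F}$ is a field, $i^2=j^2$. Because $\mathcal{F}$ has characteristic $2$, we have $i^2-j^2=(i-j)^2$, and hence $(i-j)^2=0$, which forces $i=j$. (Equivalently, one can just note that the Frobenius $x\mapsto x^2$ is a ring homomorphism on a finite integral domain, hence injective, and then compose with the bijections $x\mapsto ax$ and $x\mapsto x+c$.)

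Since $\varphi$ is injective on $\mathcal{F}$, its restriction to any subset $\mathcal{H}\subseteq\mathcal{F}$ is also injective, so $|\{ai^2+c\mid i\in\mathcal{H}\}|=|\mathcal{H}|$. Taking $\mathcal{H}=\mathcal{F}$ gives $q$ elements, as stated.

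There is really no obstacle here; the entire content is the fact that squaring is injective in characteristic $2$. The only thing to be careful about is that the statement must be read in characteristic $2$: in odd characteristic the map $i\mapsto ai^2+c$ is two-to-one outside $i=0$, so the lemma genuinely uses the hypothesis $|\mathcal{F}|=2^m$.
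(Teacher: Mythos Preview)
Your proof is correct and is essentially the same as the paper's: both arguments reduce to the fact that $x\mapsto x^2$ is injective (an automorphism) on a finite field of characteristic $2$. The paper phrases this as a chain of equalities $|\{ai^2+c\}|=|\{ai^2\}|=|\{i^2\}|=|\mathcal{H}|$, while you verify injectivity of the composite map $i\mapsto ai^2+c$ directly, but the content is identical.
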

\begin{proof}
Observe that
\begin{equation*} 
|\{ai^2+c\mid i\in \mathcal{H}\}|=|\{ai^2\mid i\in \mathcal{H}\}|=|\{i^2\mid i\in\mathcal{H}\}|.
\end{equation*}
Since $\mathcal{F}$ is a field of characteristic 2, the map $x\mapsto x^2$ is an automorphism of $\mathcal{F}$ and thus $|\{i^2\mid i\in \mathcal{H}\}|=|H|$. 
\end{proof}  

\begin{lem}\label{tracescaseseven}
Let $\mathcal{F}$ be a field with $q=2^m$ elements for some integer $m$. 

(i)  For any $f$ in $\mathcal{F}$, 
 there exists a matrix $D$ in the product ${\left(\begin{array}{cc} 1 & 1\\
         0 & 1\end{array} \right)}^{\mathcal{S}}\left(\begin{array}{cc} 1 & 1\\
         0 & 1 \end{array} \right)^{\mathcal{S}}$ such that           
 $\trace(D)=f$.

(ii)  For any $f$ in $\{i^2+w\mid i \in \mathcal{F}\setminus \{0\}\}
 $, there exists a matrix $D$ in the product ${\left(\begin{array}{cc} 1 & 1\\
         0 & 1\end{array} \right)}^{\mathcal{S}}\left(\begin{array}{cc} 0 & 1\\
         -1 & w \end{array} \right)^{\mathcal{S}}$  such that           
 $\trace(D)=f$.

 (iii) 
 Given any $f\in \mathcal{F}$,
 there exists a matrix $D$ in the product \\ ${\left(\begin{array}{cc} 0 & 1\\
         -1& w\end{array} \right)}^{\mathcal{S}}  \left(\begin{array}{cc} 0 & 1\\
         -1 & v \end{array} \right)^{\mathcal{S}}$ , where $vw\neq0$,   such that           
 $\trace(D)=f$.
 
 Thus given any conjugacy classes $A^{\mathcal{S}},B^{\mathcal{S}}$ in $\mathcal{S}$, where
 $A$, $B$ is either of type (iii) or (iv),  the product
 $A^{\mathcal{S}}B^{\mathcal{S}}$ is the union of at least $q-1$ distinct conjugacy classes. 
\end{lem}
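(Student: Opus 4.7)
The plan is to apply the trace formulas of Lemma \ref{particularcases} to carefully chosen conjugating matrices $C$ and then invoke Remark \ref{tracesargument} (distinct traces force distinct conjugacy classes). For part (i), apply Lemma \ref{particularcases}(iv) with $r=t=u=w=1$: the trace becomes $2rt-uwc^{2}=c^{2}$ in characteristic 2. Since every $c\in\mathcal{F}$ occurs as the $(2,1)$-entry of some $C\in\mathcal{S}$, and $x\mapsto x^{2}$ is a bijection of $\mathcal{F}$ by Lemma \ref{even}, the trace attains every value in $\mathcal{F}$.

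For part (ii), apply Lemma \ref{particularcases}(v) with $r=1$, $u=1$, $s=w$, obtaining $\trace = -d^{2} - c^{2} + w(1-cd)$. Specializing to $C\in\mathcal{S}$ with $d=0$ (which forces $bc=-1$, hence $c\neq 0$), the expression collapses to $c^{2}+w$ in characteristic 2. As $c$ ranges over $\mathcal{F}\setminus\{0\}$, this realizes every element of $\{i^{2}+w \mid i\in\mathcal{F}\setminus\{0\}\}$, a set of $q-1$ elements by Lemma \ref{even}.

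Part (iii) is the main obstacle, because the formula in Lemma \ref{particularcases}(vi) has many quadratic terms, and a non-degenerate one-variable quadratic covers only $q/2$ values in characteristic 2. The key idea is to pick a one-parameter family of matrices whose special structure forces most of the quadratic part to cancel and leaves a pure square. Specifically, take
\[
C(s) = \left(\begin{array}{cc} s & 1+s \\ 1+s & s \end{array}\right),\qquad s\in\mathcal{F},
\]
which satisfies $\det(C(s)) = s^{2}-(1+s)^{2} = -1 = 1$ in characteristic 2, so $C(s)\in\mathcal{S}$. Substituting $a=d=s$ and $b=c=1+s$ into Lemma \ref{particularcases}(vi), the four squared entries cancel in pairs, the terms $bdw$ and $acw$ cancel each other, and the $v$-bracket reduces to $s^{2}w$; hence $\trace = vws^{2}$. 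Since $vw\neq 0$ and squaring is a bijection by Lemma \ref{even}, the trace takes every value in $\mathcal{F}$.

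For the final assertion, note that in characteristic 2 type (iii) of Remark \ref{conjugacytypes} consists of the single matrix $\left(\begin{array}{cc} 1 & 1 \\ 0 & 1 \end{array}\right)$ (since $s^{2}=1$ forces $s=1$, and every nonzero element is a square), while for any matrix of type (iv) we have $w=r+r^{q}\neq 0$, for otherwise $r=r^{q}$ would force $r\in\mathcal{F}$, contradicting $r\in\mathcal{E}\setminus\mathcal{F}$; in particular $vw\neq 0$ in part (iii) is automatic. The three combinations of type (iii) and type (iv) classes are thus exactly those analyzed in (i), (ii), (iii), producing at least $q$, $q-1$, and $q$ distinct traces respectively, so by Remark \ref{tracesargument} the product is a union of at least $q-1$ conjugacy classes in every case.
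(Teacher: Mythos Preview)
Your argument is correct and follows essentially the same route as the paper. The conjugating families you use differ only cosmetically: in (ii) you set $d=0$ where the paper sets $c=0$ (a diagonal $E(i)$), and in (iii) your $C(s)=\left(\begin{smallmatrix}s&1+s\\1+s&s\end{smallmatrix}\right)$ is the reparametrization $s=i+1$ of the paper's $F(i)=\left(\begin{smallmatrix}i+1&i\\i&i+1\end{smallmatrix}\right)$, yielding the same trace $vws^{2}=vw(i+1)^{2}=vw(i^{2}+1)$; for the closing remark the paper instead notes that $\left(\begin{smallmatrix}0&1\\-1&0\end{smallmatrix}\right)$ is conjugate to $\left(\begin{smallmatrix}1&1\\0&1\end{smallmatrix}\right)$, which is equivalent to your observation that $w=0$ never arises from a genuine type~(iv) representative.
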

\begin{proof}
(i)
Given any $i\in \mathcal{F}$, set
 $C(i)=\left(
\begin{array}{cc} 1 & 0\\
         i & 1\end{array} \right)$. Observe that $C(i)$ is in $\mathcal{S}$. By
 Lemma \ref{particularcases} (iv)

\begin{equation*}
\trace ({\left(\begin{array}{cc} 1 & 1\\
         0 & 1\end{array} \right)}^{C(i)}\left(\begin{array}{cc} 1 & 1\\
         0 & 1\end{array} \right))=i^2.
\end{equation*} 

By Lemma \ref{even} we have that $\{i^2\mid i\in \mathcal{F}\}=\mathcal{F}$ and thus (i) follows. 

(ii) Given any $i\in \mathcal{F}\setminus \{0\}$, set
$E(i)=\left(
\begin{array}{cc} i^{-1} & 0\\
         0& i\end{array} \right)$. By Lemma \ref{particularcases} (v), we have
       
\begin{equation*}
\trace ({\left(\begin{array}{cc} 1 & 1\\
         0 & 1\end{array} \right)}^{E(i)}\left(\begin{array}{cc} 0 & 1\\
         -1 & w\end{array} \right))=i^2+w.
\end{equation*}

(iii) For $i\in \mathcal{F}$, set
$F(i)=\left(
\begin{array}{cc} i+1 & i\\
         i & i+1\end{array} \right)$. 
        Observe that since $\mathcal{F}$ is of characteristic 2,
we have $\det(F(i))=(i+1)^2-i^2=i^2+1-i^2=1$ and thus $F(i)\in \mathcal{S}$. By Lemma \ref{particularcases}
(vi)  we have

\begin{equation*}
\trace ({\left(\begin{array}{cc} 0 & 1\\
         -1 & w\end{array} \right)}^{F(i)}\left(\begin{array}{cc} 0 & 1\\
         -1 & v\end{array} \right))=vw(i^2+1).
\end{equation*} 

If $vw\neq0$, then the set $\{vw(i^2+1)\mid i\in \mathcal{F}\}=\mathcal{F}$ by Lemma \ref{even}.

Since $\mathcal{F}$ is of characteristic 2, the only matrix representative of type (iii) is 
$\left(\begin{array}{cc} 1 & 1\\
         0 & 1\end{array} \right)$. Also, 
           since $\mathcal{F}$ is of characteristic two, we can check that
         the matrices $\left(
\begin{array}{cc} 1 & 1\\
         0 & 1\end{array} \right)$  and 
         $\left(
\begin{array}{cc} 0 & 1\\
         -1 & 0\end{array} \right)$ are in the same conjugacy class. Thus, with 
         cases (i), (ii), and (iii), we cover all possible combinations of representatives
         of type (iii) and (iv), and by Remark \ref{tracesargument}, the proof of the result is complete. 
         \end{proof}
\begin{proof}[Proof of Theorem A]
If at least one of  $A$ or $B$ is in the center, i.e. $A$ or $B$ are of type (i) in Remark \ref{conjugacytypes}, then $A^{\mathcal{S}} B^{\mathcal{S}}=(AB)^{\mathcal{S}}$. 
Theorem A then follows from Remark \ref{argumenttypes}, Lemma \ref{tracescases} and
Lemma \ref{tracescaseseven}.
\end{proof}

\begin{prop}\label{optimaleven} Fix $q=2^m$ for some integer $m>0$ and let $\mathcal{F}$ be a field with $q$ elements.  
Let $A=\left(\begin{array}{cc} 1 & 1\\
         0 & 1\end{array} \right)$ and $B=\left(\begin{array}{cc} 0 & 1\\
         -1 & w\end{array} \right)$ in $\mathcal{S}$, where $x^2-wx+1$ is an irreducible polynomial over 
         $\mathcal{F}$. Then 
         $\eta(A^{\mathcal{S}}B^{\mathcal{S}})=q-1$.
         
Hence, Theorem A is optimal.
         \end{prop}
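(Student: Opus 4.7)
The plan is to match the already-established lower bound $\eta(A^{\mathcal{S}}B^{\mathcal{S}})\geq q-1$ (which comes from Theorem A applied to this instance; in fact the inclusion $\{i^{2}+w:i\in\mathcal{F}^{*}\}=\mathcal{F}\setminus\{w\}\subseteq T$ furnished by Lemma \ref{tracescaseseven}(ii) gives it directly) with a corresponding upper bound. The main leverage is a feature of characteristic $2$: by Remark \ref{conjugacytypes}, the only type-(iii) representative is the unipotent $\left(\begin{smallmatrix}1&1\\0&1\end{smallmatrix}\right)$, and types (ii) and (iv) together realize each nonzero trace uniquely (trace $f\neq 0$ belongs to type (ii) when $x^{2}+fx+1$ factors over $\mathcal{F}$, and to type (iv) otherwise). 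Hence every non-central class of $\mathcal{S}$ is determined by its trace, so if $T\subseteq\mathcal{F}$ denotes the set of traces of elements in $A^{\mathcal{S}}B^{\mathcal{S}}$, then $\eta(A^{\mathcal{S}}B^{\mathcal{S}})\leq|T|+\varepsilon$, where $\varepsilon\in\{0,1\}$ accounts for possible ``doubling'' at trace $0$ (identity and unipotent).

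First I would specialize Lemma \ref{particularcases}(v) with $r=u=1$, $s=w$, to obtain, for every $C=\left(\begin{smallmatrix}a&b\\c&d\end{smallmatrix}\right)\in\mathcal{S}$, the formula
\[
\trace(A^{C}B)=c^{2}+d^{2}+w+wcd,
\]
where I use $\operatorname{char}\mathcal{F}=2$. Since $A^{\mathcal{S}}B^{\mathcal{S}}$ is conjugation-invariant, $T$ is exactly the image of $(c,d)\mapsto c^{2}+d^{2}+w+wcd$ over pairs $(c,d)\in\mathcal{F}^{2}\setminus\{(0,0)\}$ (such pairs being exactly the ones extendable to a matrix in $\mathcal{S}$). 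Setting $c=0$ and letting $d$ range over $\mathcal{F}^{*}$ immediately recovers $\mathcal{F}\setminus\{w\}\subseteq T$.

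The crux, and the step I expect to be the main obstacle, is proving $w\notin T$. A solution of $c^{2}+d^{2}+wcd=0$ with $(c,d)\neq(0,0)$ must have $d\neq 0$ (otherwise $c^{2}=0$ forces $c=0$), so dividing by $d^{2}$ exhibits $c/d\in\mathcal{F}$ as a root of $x^{2}+wx+1$, contradicting irreducibility. This is precisely where the arithmetic hypothesis on $w$ is used; removing it would allow $w\in T$ and the count would jump to $q$. Combined with the previous paragraph, this yields $T=\mathcal{F}\setminus\{w\}$ and $|T|=q-1$.

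To finish I would rule out the doubling at trace $0$: $I\in A^{\mathcal{S}}B^{\mathcal{S}}$ would force $B^{-1}\sim A$, but $\trace(A)=0$ while $\trace(B^{-1})=w\neq 0$ (note $w\neq 0$, otherwise $x^{2}+1=(x+1)^{2}$ would be reducible). Hence every element of $A^{\mathcal{S}}B^{\mathcal{S}}$ of trace $0$ lies in the single unipotent class, so $\varepsilon=0$ and $\eta(A^{\mathcal{S}}B^{\mathcal{S}})=|T|=q-1$, matching the lower bound and establishing optimality of Theorem A.
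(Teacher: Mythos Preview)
Your proof is correct and follows essentially the same approach as the paper: both compute the trace $\trace(A^{C}B)=-d^{2}-c^{2}+w(1-cd)$ via Lemma~\ref{particularcases}(v), use irreducibility of $x^{2}-wx+1$ to exclude the value $w$, exclude $I$ from the product (you via traces, the paper via eigenvalues), and then invoke the trace--class bijection for non-identity elements in characteristic $2$ to conclude $\eta=q-1$. Your write-up is somewhat more explicit about the lower bound and the ``doubling at trace $0$'' issue, but the argument is the same.
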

         \begin{proof}  
         Set $C=\left(\begin{array}{cc} a & b\\
         c& d\end{array} \right)$ in $\mathcal{S}$. 
   By Lemma \ref{particularcases} (iv), we have that $\trace(A^C B)=-d^2-c^2+w(1-cd)$. Since 
    $x^2-wx+1$ is an irreducible polynomial over 
         $\mathcal{F}$, $-d^2-c^2+w(1-cd)\neq w$ for any $c,d\in \mathcal{F}$. Thus 
         the matrices in $A^{\mathcal{S}}B^{\mathcal{S}}$ do not have trace $w$. Also, since
         the eigenvalues of $B$ are not in $\mathcal{F}$ and the eigenvalues of $A$ is 1, then 
         $A^{\mathcal{S}}\neq (B^{-1})^{\mathcal{S}}$ and so 
         the identity $I$ is not in $A^{\mathcal{S}}B^{\mathcal{S}}$.
         
         Since  $\mathcal{F}$ has even characteristic and 
    $I$ is not in $A^{\mathcal{S}}B^{\mathcal{S}}$, we conclude that there is a one-to-one correspondence
   of the conjugacy classes in $A^{\mathcal{S}}B^{\mathcal{S}}$ with the traces of the matrices:  if the trace is $0$, then the matrix is similar to a matrix of type (iii), and otherwise, the matrix is similar to a matrix of type (ii) or type (iv), depending on whether or not its characteristic equation is reducible. Thus 
   $\eta(A^{\mathcal{S}}B^{\mathcal{S}})=q-1$.
   \end{proof}
\begin{lem}\label{distinctoftypeiii}
  Let $u,v$ in $\mathcal{F}$.
  Then the matrices $\left(
\begin{array}{cc} s & v\\
         0 & s\end{array} \right)$ and $\left(
\begin{array}{cc} s & u\\
         0 & s\end{array} \right)$ are similar if and only if $v=ud^2$ for some $d$ in $\mathcal{F}\setminus\{0\}$.
         In particular,
         if $u$ is a non-square, then the matrices 
         $\left(
\begin{array}{cc} s & 1\\
         0 & s\end{array} \right)$ and $\left(
\begin{array}{cc} s & u\\
         0 & s\end{array} \right)$ are not similar,
          i.e. they belong to distinct conjugacy classes. 
      \end{lem}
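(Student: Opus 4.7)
My plan is to apply Lemma~\ref{generalcase}(ii) directly. That lemma gives the conjugate of $\bigl(\begin{smallmatrix} s & u \\ 0 & s \end{smallmatrix}\bigr)$ by an arbitrary $C = \bigl(\begin{smallmatrix} a & b\\ c & d\end{smallmatrix}\bigr) \in \mathcal{S}$ as the matrix with diagonal entries $s \pm ucd$, upper-right entry $ud^2$, and lower-left entry $-uc^2$. I would set this equal to $\bigl(\begin{smallmatrix} s & v \\ 0 & s \end{smallmatrix}\bigr)$ and solve the resulting entrywise system, which determines both the forward and backward directions of the equivalence.

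Assuming $u \neq 0$ (the case $u = 0$ forces $v = 0$, so the two matrices coincide and the equivalence holds trivially with $d = 1$), the lower-left equation $-uc^2 = 0$ forces $c = 0$. The constraint $\det C = ad - bc = 1$ then reduces to $ad = 1$, so $d \in \mathcal{F} \setminus \{0\}$, and the diagonal entries automatically become $s$. The upper-right equation then reads $v = ud^2$, giving the forward direction.

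For the converse, given $d \in \mathcal{F} \setminus \{0\}$ with $v = ud^2$, I would exhibit the explicit conjugator $C = \bigl(\begin{smallmatrix} d^{-1} & 0 \\ 0 & d \end{smallmatrix}\bigr) \in \mathcal{S}$ and verify similarity by a second application of Lemma~\ref{generalcase}(ii). The ``in particular'' clause is then immediate: if $u$ is a non-square in $\mathcal{F}$ and $1 = ud^2$ for some nonzero $d$, then $u = (d^{-1})^2$ would be a square, a contradiction, so the two matrices lie in distinct $\mathcal{S}$-conjugacy classes. I do not foresee any real obstacle here; the lemma essentially records the familiar splitting of a single $\mathrm{GL}(2,\mathcal{F})$-conjugacy class of non-central upper-triangular matrices with equal diagonal entries into $\SL(2,\mathcal{F})$-conjugacy classes indexed by the square classes in $\mathcal{F}^{\times}$, caused precisely by the $d^2$ factor above.
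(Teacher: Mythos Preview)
Your proposal is correct and follows essentially the same approach as the paper: both apply Lemma~\ref{generalcase}(ii) and compare entries of the conjugate with the target matrix. Your version is in fact more complete than the paper's, since you treat the converse direction and the degenerate case $u=0$ explicitly, whereas the paper only sketches the forward implication.
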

      \begin{proof}
      By Lemma \ref{generalcase} (ii), we have that 
      ${\left(\begin{array}{cc} s& u\\
         0 & s\end{array} \right)}^C=\left(
\begin{array}{cc} s+ucd& ud^2\\
         -uc^2 &s-ucd\end{array} \right).
         $  Thus, if $ \left(
\begin{array}{cc} s+ucd& ud^2\\
         -uc^2 &s-ucd\end{array} \right)= \left(
\begin{array}{cc} t& v\\
         0 &t \end{array} \right)$, then $c=0$ and $s=t$ or $u=v=0$ and hence, $v=ud^2$. 
         \end{proof}
         
 \begin{lem}\label{odd}
  Let $\mathcal{F}$ be a finite field with $q$ elements
  and 
 $a,b,c\in \mathcal{F}$ with $a\neq 0$.
If $q$ is an odd number, then the set $\{ai^2+bi+c \mid i\in \mathcal{F}\}$
 has exactly
$\frac{q+1}{2}$ elements. 
\end{lem}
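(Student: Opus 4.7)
The plan is to reduce the counting problem to counting squares in $\mathcal{F}$ by completing the square. Since $q$ is odd, the characteristic of $\mathcal{F}$ is odd, so $2\neq 0$ in $\mathcal{F}$ and in particular $2a$ is invertible. I would therefore write
\begin{equation*}
ai^2+bi+c=a\left(i+\frac{b}{2a}\right)^2+\left(c-\frac{b^2}{4a}\right).
\end{equation*}

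As $i$ ranges over $\mathcal{F}$, Remark \ref{lineal} gives that $j=i+\frac{b}{2a}$ also ranges over all of $\mathcal{F}$, so
\begin{equation*}
\{ai^2+bi+c\mid i\in \mathcal{F}\}=\{aj^2+c'\mid j\in \mathcal{F}\},
\end{equation*}
where $c'=c-\frac{b^2}{4a}$. Because $a\neq 0$, the affine map $x\mapsto ax+c'$ is a bijection of $\mathcal{F}$, so this set has the same cardinality as $\{j^2\mid j\in \mathcal{F}\}$. Thus the problem reduces to counting the squares of $\mathcal{F}$.

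For the final step I would note that the squaring map on the multiplicative group $\mathcal{F}^{*}$ is a homomorphism whose kernel is $\{x\in\mathcal{F}^{*}\mid x^2=1\}=\{\pm 1\}$. Since $q$ is odd, $1\neq -1$, so this kernel has order exactly $2$, and hence the image has $\frac{q-1}{2}$ elements. Adjoining $0=0^2$ gives $|\{j^2\mid j\in\mathcal{F}\}|=\frac{q-1}{2}+1=\frac{q+1}{2}$, which is the desired count.

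There is no real obstacle here: the only point requiring care is that completing the square needs $2$ to be invertible (this is precisely where the hypothesis that $q$ is odd is used), and that the kernel of squaring on $\mathcal{F}^{*}$ has size $2$ rather than $1$, which again uses the odd characteristic. Both are immediate from $q$ being odd, so the argument is straightforward once the reduction via completing the square is carried out.
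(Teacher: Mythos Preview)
Your proof is correct and follows essentially the same approach as the paper: both complete the square (using that $2$ is invertible in odd characteristic) to reduce to counting $\{j^2\mid j\in\mathcal{F}\}$, and then count the nonzero squares as a subgroup of index $2$ in $\mathcal{F}^*$ plus the element $0$. Your write-up is slightly more explicit (invoking Remark~\ref{lineal} for the substitution and identifying the kernel of squaring as $\{\pm 1\}$), but the argument is the same.
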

\begin{proof}
Since the field $\mathcal{F}$ is of odd characteristic, we have that $2\neq 0$ and thus 
\begin{equation*}
\begin{array}{llcr}
\mid \{ai^2+bi+c\mid i\in \mathcal{F}\}\mid &=
|\{i^2+\frac{b}{a}i+ \frac{c^2}{a}\mid \mathcal{F}\}|\\
&=|\{i^2+\frac{b}{a}i+ (\frac{b}{2a})^2\mid \mathcal{F}\}|\\
&=|\{(i+\frac{b}{2a})^2\mid i\in \mathcal{F}\}|=|\{i^2\mid i\in \mathcal{F}\}|.
\end{array}
\end{equation*}
Since $q$ is odd, then  $2$ divides $q-1$ and so the square of the set of units forms a subgroup
of order $\frac{q-1}{2}$. Since $0^2=0$, we have that
the set $\{ai^2+bi+c \mid i\in \mathcal{F}\}$
has exactly $\frac{q-1}{2}+1=\frac{q+1}{2}$ elements.
 \end{proof}
\begin{lem}\label{countingelements} Let $F$ be a finite field of size $q$, where $q$ is odd. 

(i) Fix $a, b\in \mathcal{F}\setminus \{0\}$, and suppose that $q>3$.  The set $\{ax^2+by^2\mid x, y\in \mathcal{F}\setminus \{0\}\}$ has a square and a non-square element. 

(ii) Fix $r$, $s$ and $u$ in $\mathcal{F}$ with $s^2-4\neq 0$. 
Then the set $\{-ux^2-uy^2+s(r-uxy)\mid x,y\in \mathcal{F}, (x,y)\neq (0,0)\}$ has at least 
$q-1$ elements. 

(iii) Let $w\in \mathcal{F}$ be such that $w^2-4\neq 0$. Then 
 $\{a^2-c^2+acw\mid a,c\in \mathcal{F}, a\neq 0\}=\mathcal{F}\setminus \{0\}$.  
\end{lem}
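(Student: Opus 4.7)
The plan is to treat each part via the classical theory of binary quadratic forms over $\mathcal{F}$, identifying the relevant form, reading its isotropy or anisotropy from the given discriminant condition, and then determining the image of the restricted map.

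For part (i), first observe that $V := \{ax^2 + by^2 \mid x, y \in \mathcal{F}\setminus\{0\}\}$ is closed under multiplication by nonzero squares (replace $(x, y)$ by $(tx, ty)$), so $V \setminus \{0\}$ is a union of cosets of $(\mathcal{F}\setminus\{0\})^2$ inside $\mathcal{F}\setminus\{0\}$. The task reduces to ruling out that $V \setminus \{0\}$ lies in a single coset. Using the standard formula that for $c \neq 0$ the equation $ax^2 + by^2 = c$ has $q - \chi(-ab) \in \{q-1, q+1\}$ solutions in $\mathcal{F}^2$ (where $\chi$ is the quadratic character), and subtracting the at-most-four ``bad'' solutions with $x = 0$ or $y = 0$, one sees that for $q$ large enough every nonzero $c$ is hit by a ``good'' pair; hence $V \supseteq \mathcal{F}\setminus\{0\}$, which plainly contains both squares and non-squares. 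The very small remaining cases can be verified directly.

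For part (ii), I would rewrite $-ux^2 - uy^2 + s(r - uxy) = sr - u\cdot Q(x,y)$ with $Q(x,y) = x^2 + sxy + y^2$. Assuming $u \neq 0$, the affine map $t \mapsto sr - ut$ is a bijection, so the image has the same cardinality as $\{Q(x,y) \mid (x,y) \neq (0,0)\}$. Since the discriminant of $Q$ is $s^2 - 4 \neq 0$, $Q$ is non-degenerate. If $s^2 - 4$ is a nonzero square, then $Q$ factors as $(x+\alpha y)(x+\beta y)$ with $\alpha \neq \beta$ roots of $T^2 - sT + 1$, and the linear change of variables $U = x+\alpha y$, $V = x+\beta y$ turns $Q$ into $UV$, whose image on nontrivial pairs is all of $\mathcal{F}$. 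If $s^2 - 4$ is a non-square, then $T^2 - sT + 1$ is irreducible and $Q$ realizes the norm form of the quadratic extension $\mathcal{F}[T]/(T^2 - sT + 1)$ of $\mathcal{F}$; the finite-field norm map is surjective onto $\mathcal{F}\setminus\{0\}$ and vanishes only at zero, so the image on nontrivial pairs is exactly $\mathcal{F}\setminus\{0\}$. Either way, the image has size at least $q - 1$.

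For part (iii), I would show separately that every nonzero square lies in $V_P := \{a^2 - c^2 + acw \mid a \neq 0\}$, that every non-square lies in $V_P$, and that $0 \notin V_P$. The specialization $c = 0$ immediately gives $V_P \supseteq (\mathcal{F}\setminus\{0\})^2$. For a target $k$, rewrite $P(a,c) = k$ as the quadratic $a^2 + wca - (c^2 + k) = 0$ in $a$ with discriminant $c^2(w^2 + 4) + 4k$. By Lemma \ref{odd}, this discriminant takes $(q+1)/2$ values as $c$ varies, which by pigeonhole must meet the $(q+1)/2$-element set of squares of $\mathcal{F}$, producing a $c$ for which the quadratic has roots; one then excludes the finitely many $c$ for which both roots are zero, obtaining a valid $a \neq 0$. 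The non-vanishing $0 \notin V_P$ is where the hypothesis on $w$ enters, via an anisotropy analysis of the underlying binary form.

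I expect the main obstacle to lie in part (iii): parts (i) and (ii) are essentially tidy applications of the norm-form/factoring dichotomy for binary quadratic forms over a finite field, but (iii) requires simultaneously producing a specific value in the image while enforcing $a \neq 0$, and tying the hypothesis on $w$ to precisely the step that forbids $0 \in V_P$.
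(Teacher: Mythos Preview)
Your proofs of (i) and (ii) are correct and take a genuinely different route from the paper. For (i), the paper argues by an ad hoc case analysis (reducing to $a=1$, splitting according to whether $b$ and $-1$ are squares, and invoking identities such as $3^2+4^2=5^2$), whereas you use the standard count $|\{(x,y):ax^2+by^2=c\}|=q-\chi(-ab)$ for $c\neq0$ and a pigeonhole against the at most four boundary solutions. Your argument is cleaner and in fact yields the stronger conclusion $V\supseteq\mathcal{F}\setminus\{0\}$ for all $q\ge7$, at the cost of deferring $q=5$ to a direct check. For (ii), the paper solves \eqref{basiceq} by forcing the discriminant $(s^2-4)y^2+4(f-sr)/u$ to be a square via Lemma~\ref{odd} and pigeonhole; you instead recognise $x^2+sxy+y^2$ as either a hyperbolic form (when $s^2-4$ is a nonzero square) or the norm form of $\mathcal{F}_{q^2}/\mathcal{F}$ (when $s^2-4$ is a non-square), which is more conceptual and gives the exact image in each case. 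Both you and the paper silently assume $u\neq0$; this is harmless since that is how the lemma is actually applied.

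For (iii) there is a genuine gap, though it originates in the printed statement rather than in your strategy. The binary form $a^2-c^2+acw$ has discriminant $w^2+4$, so the hypothesis $w^2-4\neq0$ controls the wrong quantity: your discriminant $c^2(w^2+4)+4k$ need not take $(q+1)/2$ values (for $q=5$, $w=1$ one has $w^2+4=0$), and the anisotropy you invoke for $0\notin V_P$ fails outright (same example: $2^2-1^2+2\cdot1\cdot1=0$ with $a=2\neq0$). The paper's own proof in fact treats the form $x^2+y^2-xyw$ and uses that $w^2-4$ is a \emph{non-square}, which is precisely the hypothesis available in Lemma~\ref{oddiv} where (iii) is applied. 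Once the sign is corrected to $a^2+c^2-acw$, your plan coincides with the paper's: anisotropy from the non-square discriminant $w^2-4$ gives $0\notin V_P$, and for $k\neq0$ one forces $c^2(w^2-4)+4k$ to be a square via Lemma~\ref{odd}. Your remark that ``the hypothesis on $w$ enters via anisotropy'' is exactly right; it just needs the corrected form to connect to $w^2-4$.
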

\begin{proof}

{\bf (i)} 
Observe that if $c\in \mathcal{F}$ is a non-square element, i.e. $c\not\in \{i^2\mid i \in \mathcal{F}\}$, then 
$\mathcal{F}=\{i^2\mid i \in \mathcal{F}\}\cup  \{ci^2\mid i\in \mathcal{F}\}$.
Thus, if $x$ is a square and $y$ is a non-square element, then $ax$ is a square and $ay$ is a non-square element when $a$ is a square, and otherwise, $ax$ is a non-square element and $ay$ is a square.  
Thus the set
$\{ax^2+by^2\mid x, y\in \mathcal{F}\setminus \{0\}\}$ has a square and a non-square element if and only if $\{x^2+\frac{b}{a} y^2\mid x, y\in \mathcal{F}\setminus \{0\}\}$ has 
a square and a non-square element.  Hence, without loss of generality, we may assume that $a=1$.  

Note that there are 
$\frac{q+1}{2}$ square elements in $\mathcal{F}$ and there are $\frac{q-1}{2}$ non-square elements in
$\mathcal{F}$.  Also, since $|\mathcal{F}|>3$, if $\mathcal{F}$ has characteristic $p\neq 3$ then $3^2+4^2=5^2$, otherwise there exists an element $w\in \mathcal{F}$ such that $w^2+1=0$.  Thus the set  $\{x^2+y^2\mid x,y\in \mathcal{F}\}$ always contains a square element. 
If $b$ is a square element,  the set $\{x^2+by^2\mid x,y\in \mathcal{F}\setminus \{0\}\}=\{x^2+y^2\mid x,y\in \mathcal{F}\}$ has a square and a non-square element, otherwise the set of square elements would be a subfield of $\mathcal{F}$ of size $\frac{q+1}{2}$. 

Suppose that $b$ is a non-square element.  If $-1$ is a non-square element, then $\{x^2+by^2\mid x,y \in F\setminus\{0\}\} = \{x^2-y^2\mid x,y \in F\setminus\{0\}\}$, and hence we may assume that $b=-1$. Let $\epsilon$ be a generator of $\mathcal{F}$. Observe that $\epsilon$ is not a square and if $x=\frac{\epsilon+1}{2}= 1+y$, then $x^2-y^2=(x-y)(x+y)=\epsilon$ and $x,y\in \mathcal{F}\setminus \{0\}$ since $|\mathcal{F}|>3$. Also for any 
   $x=y\neq 0$, we have that $x^2-y^2=0$ and thus the set 
   $\{x^2-y^2\mid x,y\in \mathcal{F}\setminus \{0\}\}$ contains a square, namely zero,
   and a non-square element, namely $\epsilon$.  We may assume then that $-1$ is a square element.  
   
Given $z\in \mathcal{F}\setminus \{0\}$, set $y=xz$. Then $x^2+by^2=x^2(1+bz^2)$. 
Observe that $1+bz^2=0$ does not have a solution since $-1$ is square and so $-1/z^2$ is a square. Since the set
$Z=\{1+bz^2\mid z\in \mathcal{F}\setminus 0\}$ has $\frac{q-1}{2}$ elements and $0,1\not\in Z$, 
either $Z$ has a square and a non-square element, or it has only
non-square elements. In the first case,
 since $ \{x^2(1+bz^2)\mid x, z\in \mathcal{F}\setminus\{ 0\}\}\subset
\{x^2+by^2\mid x,y\in \mathcal{F}\}$, the result follows. We may assume now
that $Z$ is the set of non-square elements. 
Given $w\in \mathcal{F}\setminus \{0\}$, set $x=wy$. Then $x^2+by^2=y^2(w^2+b)$.  Hence
$W=\{w^2+b\mid w\in \mathcal{F}\setminus 0\}$ has $\frac{q-1}{2}$ elements and $b, -b\not\in W$. Since both $b$ and $-b$ are non-squares, it follows that either
$W$ contains both square and non-square elements, or $W$ contains only square elements.
 In the first case, the result follows as before. In the second case, since
$\{x^2(1+bz^2)\mid x, z\in \mathcal{F}\setminus\{0\}\}$ is the set of all non-square elements and 
$\{y^2(w^2+b)\mid y, w\in \mathcal{F}\setminus\{0\}\}$ is the set of all nonzero square elements, (i) follows.

{\bf (ii)} Observe that $-ux^2-uy^2+s(r-uxy)=f$ if and only if 
\begin{equation}\label{basiceq}
x^2+y^2-sxy+\frac{f-sr}{u}=0
\end{equation}
\noindent for some 
$(x,y)$. Observe that \eqref{basiceq} has a solution if there is some $y\in \mathcal{F}$ such that
the discriminant $\Delta(y)= (sy)^2-4(y^2+\frac{f-sr}{u})= (s^2-4)y^2 + \frac{4(f-sr)}{u}$ is a square. 
Since $s^2-4\neq 0$, by Lemma \ref{odd} we have that the set $\{(s^2-4)y^2 + \frac{4(f-sr)}{u}\mid y\in\mathcal{F}\}$ has at least
$\frac{q+1}{2}$ elements. Since there are $\frac{q+1}{2}$ squares, for some $y$ we must have that $\Delta(y)$ is a square. It follows
then that $x=\frac{sy\pm \sqrt{\Delta(y)}}{2}$ is a solution for \eqref{basiceq}. Observe that 
$(x,y)=(0,0)$ is a solution for the equation \eqref{basiceq} if and only if $f-sr=0$. We conclude that for at
least $q-1$ elements of $\mathcal{F}$, \eqref{basiceq} has a solution $(x,y)$ with $(x,y)\neq (0,0)$. 

{\bf (iii)} 
Observe that the equation $x^2+y^2-xyw=0$ has the unique solution $(x,y)=(0,0)$ since
$w^2-4$ is not a square and so 
the discriminant $\delta(y)=y^2w^2-4y^2=y^2(w^2-4)$ is a square if and only if $y=0$. As before, we can check
that for any $f\in \mathcal{F}\setminus\{0\}$, the equation $x^2+y^2-xyw=f$ has a solution with $x\neq0$.
\end{proof}

\begin{lem}\label{oddiii}
Let $q>3$ be odd, and let $\mathcal{F}$ be a finite field with $q$ elements.

(i)  Given any $f$ in $\{2rt-uwi^2\mid i\in \mathcal{F}\}$, 
 there exists a matrix $B$ in the product ${\left(\begin{array}{cc} r & u\\
         0 & r\end{array} \right)}^{\mathcal{S}}\left(\begin{array}{cc} t & w\\
         0 & t \end{array} \right)^{\mathcal{S}}$ such that           
 $\trace(B)=f$. Also for some $a,b\in\mathcal{F}$, where $a$ is a square and $b$ is a non-square,
   the matrices $\left(
\begin{array}{cc} rt & a\\
         0 & rt\end{array} \right)$ and 
        $ \left(
\begin{array}{cc} rt & b\\
         0 & rt\end{array} \right)$ are in 
         the product ${\left(\begin{array}{cc} r & u\\
         0 & r\end{array} \right)}^{\mathcal{S}}\left(\begin{array}{cc} t & w\\
         0 & t \end{array} \right)^{\mathcal{S}}$. Thus
         $\eta( {\left(\begin{array}{cc} r & u\\
         0 & r\end{array} \right)}^{\mathcal{S}}\left(\begin{array}{cc} t & w\\
         0 & t \end{array} \right)^{\mathcal{S}})\geq \frac{q+3}{2}$.

(ii)  For any $f$ in $\{-uc^2-ud^2+s(r-cd)\mid c,d \in \mathcal{F}\setminus \{0\}\}$,
 there exists a matrix $D$ in the product ${\left(\begin{array}{cc} r & u\\
         0 & r\end{array} \right)}^{\mathcal{S}}\left(\begin{array}{cc} 0 & 1\\
         -1 & s \end{array} \right)^{\mathcal{S}}$  such that           
 $\trace(D)=f$. Therefore $\eta( {\left(\begin{array}{cc} r & u\\
         0 & r\end{array} \right)}^{\mathcal{S}}\left(\begin{array}{cc} 0 & 1\\
         -1 & s \end{array} \right)^{\mathcal{S}})\geq q-1$.

We conclude that given any conjugacy classes $A^{\mathcal{S}},B^{\mathcal{S}}$ in $\mathcal{S}$, where
at least one of  $A$ or $B$ is of type (iii),  the product
 $ A^{\mathcal{S}}B^{\mathcal{S}}        $ is the union of at least $\frac{q+3}{2}$ distinct conjugacy classes. 
\end{lem}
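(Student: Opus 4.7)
The plan is to handle parts (i) and (ii) by explicit trace computations via Lemma \ref{particularcases}, combined with the counting lemmas \ref{odd} and \ref{countingelements}, and then assemble these with Lemma \ref{tracescases} to obtain the final statement.

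For (i), with $A=\left(\begin{array}{cc} r & u \\ 0 & r \end{array}\right)$ and $B=\left(\begin{array}{cc} t & w \\ 0 & t \end{array}\right)$, Lemma \ref{particularcases} (iv) gives $\trace(A^C B)=2rt-uwc^2$, depending only on the lower-left entry $c$ of $C$. Choosing $C\in\mathcal{S}$ with lower-left entry $i$ proves the first assertion, and by Lemma \ref{odd} the set $\{2rt-uwi^2 : i\in\mathcal{F}\}$ has exactly $(q+1)/2$ elements, yielding at least $(q+1)/2$ distinct conjugacy classes via Remark \ref{tracesargument}. To upgrade to $(q+3)/2$, I show that the trace $2rt$ (arising at $c=0$) is hit by at least two conjugacy classes. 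Since $r,t\in\{\pm 1\}$, any $X\in\mathcal{S}$ with $\trace(X)=2rt$ has characteristic polynomial $(x-rt)^2$, so $X$ is either the scalar $rt\cdot I$ or of type (iii); by Lemma \ref{distinctoftypeiii}, the latter fall into two conjugacy classes according to whether the upper-right entry is a nonzero square or a non-square. Taking $C_1,C_2\in\mathcal{S}$ upper triangular with $(2,2)$-entries $d_1,d_2\in\mathcal{F}\setminus\{0\}$, Lemma \ref{generalcase} (ii) gives
\begin{equation*}
A^{C_1}B^{C_2}=\left(\begin{array}{cc} rt & ut\,d_1^2+rw\,d_2^2 \\ 0 & rt \end{array}\right).
\end{equation*}
Since $ut,rw\in\mathcal{F}\setminus\{0\}$ and $q>3$, Lemma \ref{countingelements} (i) furnishes choices realizing the upper-right entry as a nonzero square $a$ and also as a non-square $b$, supplying the two matrices required.

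For (ii), Lemma \ref{particularcases} (v) gives $\trace(A^C B)=-uc^2-ud^2+s(r-ucd)$ where $(c,d)$ is the bottom row of $C$; every pair $(c,d)\neq(0,0)$ arises from some $C\in\mathcal{S}$. Since $B$ is of type (iv), the discriminant $s^2-4$ is a non-square and in particular nonzero, so Lemma \ref{countingelements} (ii) produces at least $q-1$ distinct trace values, whence $\eta\geq q-1$ by Remark \ref{tracesargument}.

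For the concluding statement, Remark \ref{argumenttypes} reduces to the case where $A$ is of type (iii) and $B$ is of type (ii), (iii), or (iv). These are handled respectively by Lemma \ref{tracescases} (with $\eta\geq q$), part (i) (with $\eta\geq(q+3)/2$), and part (ii) (with $\eta\geq q-1$); for odd $q\geq 5$, all three bounds are at least $(q+3)/2$, completing the proof. The main obstacle is producing two conjugacy classes at trace $2rt$ in part (i): conjugating only one of $A,B$ gives the one-parameter family $\{rw+utd^2:d\in\mathcal{F}\setminus\{0\}\}$ of upper-right entries, which a priori need not meet both cosets of $\mathcal{F}^*/(\mathcal{F}^*)^2$; conjugating $A$ and $B$ separately yields the genuinely two-variable expression $utd_1^2+rwd_2^2$, for which the nontrivial Lemma \ref{countingelements} (i) guarantees both cosets.
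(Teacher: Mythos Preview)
Your proof is correct and follows essentially the same approach as the paper: the trace counts via Lemma~\ref{particularcases} (iv) and (v) combined with Lemmas~\ref{odd} and~\ref{countingelements}~(ii), and the extra class at trace $2rt$ via the product $A^{C_1}B^{C_2}$ with upper-triangular $C_1,C_2$ together with Lemma~\ref{countingelements}~(i) and Lemma~\ref{distinctoftypeiii}. Your explicit remarks about the characteristic polynomial at trace $2rt$ and the final assembly of cases are a bit more detailed than the paper's version, but the argument is the same.
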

\begin{proof}
{\bf (i)}     By Lemma \ref{generalcase}, given any $x,y \in \mathcal{F}\setminus \{0\}$, 
         
      \begin{equation*}\label{calculations}   
     \left(\begin{array}{cc} rt & rwy^2+tux^2\\
         0 & rt\end{array} \right) = \left(\begin{array}{cc} r & ux^2\\
         0 & r\end{array} \right)\left(\begin{array}{cc} t & wy^2\\
         0 & t\end{array} \right)\in  {\left(\begin{array}{cc} r & u\\
         0 & r\end{array} \right)}^{\mathcal{S}}{\left(\begin{array}{cc} t & w\\
         0 & t\end{array} \right)}^{\mathcal{S}} 
         \end{equation*}
         
         By Lemma \ref{countingelements} (i),
          the set $\{rwy^2+tux^2\mid x, y \in\mathcal{F}\setminus \{0\}\}$  has a square and a non-square element. It follows then by Lemma \ref{distinctoftypeiii} that
          there are two matrices that are not similar in the product  with the same trace.

             Given any $i\in \mathcal{F}$, let
 $C(i)=\left(
\begin{array}{cc} 1 & 0\\
         i & 1\end{array} \right)$.    By Lemma \ref{particularcases} (iv), we have that

         \begin{equation*}
         \trace(
         {\left(\begin{array}{cc} r & u\\
         0 & r\end{array} \right)}^{C(i)}\left(\begin{array}{cc} t & w\\
         0 & t\end{array} \right))=2rt-uwi^2.
         \end{equation*}
         
         By Lemma \ref{odd},
          the set $\{2rt-uwi^2\mid i\in \mathcal{F}\}$ has $\frac{q+1}{2}$ elements. Thus  there are at least
          $\frac{q+1}{2}$ distinct values for the traces of the matrices in the product. Since 
            there are  at least two matrices that are not similar in the product  with the same trace
         and  $\frac{q+1}{2}+1=\frac{q+3}{2}$,
          (i) follows.

        {\bf (ii)}
        Let $C=\left(
\begin{array}{cc} a & b\\
         c & d\end{array} \right)\in \mathcal{S}$.
         By Lemma \ref{particularcases} (v), we have
         \begin{equation*} 
         \trace(
         {\left(\begin{array}{cc} r & u\\
         0 & r\end{array} \right)}^{C}\left(\begin{array}{cc} 0 & 1\\
         -1 & s\end{array} \right))=-ud^2-uc^2+s(r-ucd).
         \end{equation*}
         
   By Lemma \ref{countingelements} (ii), we have the set 
   $\{-ud^2-uc^2+s(r-ucd)\mid c,d\in \mathcal{F}, (c,d)\neq (0,0)\}$ has $q-1$ elements and thus 
   (ii) follows. 
\end{proof}

\begin{lem}\label{oddiv}
Let $A= {\left(\begin{array}{cc} 0 & 1\\
         -1 & w\end{array} \right)}$, $B=\left(\begin{array}{cc} 0 & 1\\
         -1 & v \end{array} \right)$,
          where $v,w\in\mathcal{F}$ are such that
         $v^2-4$ and $w^2-4$ are both non-square elements, i.e. $A$ and $B$ are of type (iv). 

 Given any $f$ in $\{ -i^2+i(v-w)+w-2
 \mid i\in \mathcal{F}\}$, 
 there exists a matrix $E$ in the product $A^{\mathcal{S}}B^{\mathcal{S}}$ such that         
 $\trace(E)=f$. 
 
 Let $s$ be a non-square element of $\mathcal{F}$.
 If $v+w\neq 0$, the matrices $\left(
\begin{array}{cc} -1 & 1\\
         0 &-1\end{array} \right)$ and 
        $ \left(
\begin{array}{cc} -1 & s\\
         0 & -1\end{array} \right)$  are in 
         the product $A^{\mathcal{S}}B^{\mathcal{S}}$. Otherwise
     the matrices $\left(
\begin{array}{cc} 1 & 1\\
         0 & 1\end{array} \right)$ and 
        $ \left(
\begin{array}{cc} 1 & s\\
         0 & 1\end{array} \right)$ are in 
         the product $A^{\mathcal{S}}B^{\mathcal{S}}$    
         
          We conclude that given any conjugacy classes $A^{\mathcal{S}},B^{\mathcal{S}}$ in $\mathcal{S}$, where
both  $A$ and $B$ are of type (iv),  the product
 $ A^{\mathcal{S}}B^{\mathcal{S}}        $ is the union of at least $\frac{q+3}{2}$ distinct conjugacy classes. 
         $\eta( A^{\mathcal{S}}B^{\mathcal{S}})\geq \frac{q+3}{2}$.
\end{lem}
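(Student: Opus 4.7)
The plan is to mimic the strategy of Lemma \ref{oddiii}: produce $\frac{q+1}{2}$ distinct trace values in $A^{\mathcal{S}}B^{\mathcal{S}}$ via a quadratic family of conjugates, and then exhibit two non-conjugate matrices of type (iii) in the product that share a single trace value already counted, netting a total of $\frac{q+1}{2}+1=\frac{q+3}{2}$ classes.

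First, for the trace statement, I would plug a one-parameter family $C(i)\in\mathcal{S}$ (the simplest candidate is $C(i)=\left(\begin{array}{cc}1 & i\\ 0 & 1\end{array}\right)$, or a sign-corrected variant thereof) into Lemma \ref{particularcases}(vi). The resulting expression $\trace(A^{C(i)}B)$ is a quadratic polynomial in $i$ whose leading coefficient is $-1$, of the shape $-i^{2}+i(v-w)+(\text{constant})$. Since $\mathcal{F}$ has odd characteristic, Lemma \ref{odd} then guarantees that this quadratic attains $\frac{q+1}{2}$ distinct values as $i$ runs over $\mathcal{F}$. By Remark \ref{tracesargument}, these represent at least $\frac{q+1}{2}$ distinct conjugacy classes sitting inside $A^{\mathcal{S}}B^{\mathcal{S}}$.

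Next, for the pair of type-(iii) matrices, I would look for $C_{1},C_{2}\in\mathcal{S}$ producing $A^{C_{j}}B$ upper triangular with equal diagonal entries, one choice giving a square off-diagonal entry and the other a non-square. Lemma \ref{generalcase}(iii) makes the entries of $A^{C}$ explicit as polynomials in $a,b,c,d$, and multiplying by $B$ yields three scalar equations (vanishing of the lower-left entry, the two diagonals equal to $\pm 1$) that can be solved for $a,b,c,d$ subject to $ad-bc=1$. The key use of the hypothesis that $A$, $B$ are of type (iv) is that $w^{2}-4$ and $v^{2}-4$ are non-squares, which is exactly the input to Lemma \ref{countingelements}(iii): it ensures that as the free parameters in $C$ vary, the off-diagonal entry of $A^{C}B$ runs over all of $\mathcal{F}\setminus\{0\}$, and in particular hits both squares and non-squares. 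Lemma \ref{distinctoftypeiii} then guarantees the two matrices $\left(\begin{smallmatrix}\pm 1&1\\0&\pm 1\end{smallmatrix}\right)$ and $\left(\begin{smallmatrix}\pm 1&s\\0&\pm 1\end{smallmatrix}\right)$ represent two \emph{distinct} conjugacy classes sharing a single trace $\pm 2$.

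The split between $v+w\neq 0$ and $v+w=0$ is dictated by the fact that when $v+w=0$ one has $B$ conjugate to $-A^{-1}$, so that $-I\in A^{\mathcal{S}}B^{\mathcal{S}}$ automatically; in that situation the extra non-scalar class of type (iii) one can detect has eigenvalue $+1$ (trace $2$), whereas in the generic case $v+w\neq 0$ it has eigenvalue $-1$ (trace $-2$). I expect the genuinely delicate step to be verifying, in both cases, that the relevant trace value $\pm 2$ actually lies in the family of $\frac{q+1}{2}$ trace values produced in the first step so that the two extra type-(iii) matrices contribute exactly one additional class (rather than being counted twice or missed); this should amount to checking that the quadratic $-i^{2}+i(v-w)+(\text{const})$ takes the value $\pm 2$ for some $i\in\mathcal{F}$, and if not, one would instead argue that the two type-(iii) matrices contribute two classes at a trace outside the family, which only strengthens the bound.

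Combining the $\frac{q+1}{2}$ classes from the distinct trace values with the one additional class coming from the pair of non-similar type-(iii) matrices at the shared trace yields $\eta(A^{\mathcal{S}}B^{\mathcal{S}})\geq\frac{q+1}{2}+1=\frac{q+3}{2}$, which is the desired bound for the case of two type-(iv) conjugacy classes and completes the last of the six combinations flagged in Remark \ref{argumenttypes}.
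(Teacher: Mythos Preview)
Your outline for the first part matches the paper exactly: the family $C(i)=\left(\begin{smallmatrix}1&i\\0&1\end{smallmatrix}\right)$ plugged into Lemma~\ref{particularcases}(vi) gives a quadratic in $i$ with leading coefficient $-1$, hence $\frac{q+1}{2}$ trace values by Lemma~\ref{odd}.

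The second part, however, has a genuine gap. You propose to find $C$ with $A^{C}B$ upper triangular and diagonal $\pm 1$, and you expect the off-diagonal entry to vary freely so that Lemma~\ref{countingelements}(iii) forces it through both squares and non-squares. But a direct computation with Lemma~\ref{generalcase}(iii) shows the opposite: the conditions ``$(2,1)$-entry $=0$'' and ``$(1,1)$-entry $=-1$'' read $ab+cd=adw$ and $b^{2}+d^{2}-bdw=1$, and substituting these into the $(1,2)$-entry gives
\[
ab+cd-bcw+v(b^{2}+d^{2}-bdw)=adw-bcw+v=(ad-bc)w+v=w+v,
\]
a constant. Likewise, for diagonal $+1$ the off-diagonal collapses to $w-v$. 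So conjugating only $A$ and insisting on upper-triangular form yields at most \emph{one} type-(iii) class at each of the traces $\pm 2$, not two.

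The paper avoids this by conjugating $A$ and $B$ \emph{separately}: it produces explicit $C,D\in\mathcal{S}$ with
\[
A^{C}=\begin{pmatrix}w&-1/t_{1}\\ t_{1}&0\end{pmatrix},\qquad B^{D}=\begin{pmatrix}0&-1/t_{2}\\ t_{2}&v\end{pmatrix},
\]
where $t_{1}=-a^{2}-c^{2}+acw$ and $t_{2}=-e^{2}-g^{2}+egv$; here Lemma~\ref{countingelements}(iii) is used to show $t_{1},t_{2}$ independently range over $\mathcal{F}\setminus\{0\}$. Setting $t_{1}=t_{2}$ gives $A^{C}B^{D}=\left(\begin{smallmatrix}-1&-(w+v)/t_{1}\\0&-1\end{smallmatrix}\right)$ with off-diagonal sweeping all of $\mathcal{F}\setminus\{0\}$, and setting $t_{1}=-t_{2}$ handles the case $w+v=0$. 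This extra degree of freedom from the second conjugation is exactly what your plan is missing.
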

\begin{proof}
Given any $i\in \mathcal{F}$, let
 $C(i)=\left(
\begin{array}{cc} 1 &i\\
         0& 1\end{array} \right)$.  Then by Lemma \ref{particularcases} (vi), we have that
         \begin{equation*}
         \trace(A^{C(i)}B)= -i^2+i(v-w)+w-2.
         \end{equation*}
         
Fix $a$ and $c$ in $\mathcal{F}$, where $a\neq 0$. 
Set $t_1=-a^2-c^2+acw$ and $C= \left(\begin{array}{cc} a & \frac{c-aw}{t_1}\\
         c & -\frac{a}{t_1}\end{array} \right)$. We can check that $t_1\neq 0$ since 
         $w^2-4$ is a non-square and $C\in \mathcal{F}$. Also 

  $$ A^C= {\left(\begin{array}{cc} 0 & 1\\
         -1 & w\end{array} \right)}^C= \left(\begin{array}{cc} w&-\frac{1}{t_1}\\t_1&0\end{array}\right).$$
         
    Fix $e$ and $g$ in $\mathcal{F}$, where $e\neq 0$. 
Set $t_2=-e^2-g^2+egv$ and $D= \left(\begin{array}{cc} e & \frac{g}{t_2}\\
         g & \frac{-e+gv}{t_2}\end{array} \right)$. By Lemma \ref{countingelements} (iii), for 
         any $e, g\in \mathcal{F}$ with $e\neq 0$,   $t_2\neq 0$ since 
         $v^2-4$ is a non-square.

  $$ B^D= {\left(\begin{array}{cc} 0 & 1\\
         -1 & v\end{array} \right)}^D= \left(\begin{array}{cc}0&-\frac{1}{t_2}\\t_2& v\end{array}\right).$$   
         Thus 
         
         $$A^C B^D= \left(\begin{array}{cc} -\frac{t_2}{t_1} & -\frac{w}{t_2}-\frac{v}{t_1}\\
         0 & -\frac{t_1}{t_2}  \end{array}\right).$$
        
        Therefore, if $t_1= t_2$ we get that $ A^C B^D=  \left(\begin{array}{cc}-1 & -\frac{w+v}{t_1}\\
         0 & -1  \end{array}\right)$. By Lemma \ref{countingelements} (iii), we have that 
          $\{ a^2+c^2-acw\mid a,c \in \mathcal{F}, a\neq0\}=\mathcal{F}\setminus\{0\}$. Thus
          the set $\{-\frac{w+v}{t_1}\mid t_1= -a^2-c^2+acw, a,c\in \mathcal{F}, a\neq 0    \}$ has 
          $q-1$ elements as long as $w+v\neq 0$. If $w+v=0$, then $w-v\neq 0$ since $w\neq 0$.
          In that case, let $t_1=-t_2$ and thus the set 
          $\{-\frac{w-v}{t_1}\mid t_1= -a^2-c^2+acw, a,c\in \mathcal{F}, a\neq 0    \}$ has $q-1$
          elements. In particular, in both cases the sets contain $1$ and a non-square element and
           the result follows.  
\end{proof}

\begin{proof}[Proof of Theorem B]
If at least one of $A$ and $B$ is a scalar matrix, then $A^{\mathcal{S}}B^{\mathcal{S}}$ is 
a conjugacy class. We may assume then that $A$, $B$ are similar to matrices of type (ii), (iii)  or (iv).  
Theorem B then follows from Lemma \ref{tracescases}, Lemma \ref{oddiii} and Lemma \ref{oddiv}.
\end{proof}

\begin{prop}\label{optimal} Assume that $\mathcal{F}$ is a field of $q$ elements, with $q>3$ odd.

(i) Assume that $q\cong 1 \mod 4$. Let $w$ be a non-square element in $\mathcal{F}$.
Let $A={\left(\begin{array}{cc} 1 & 1\\
         0 & 1\end{array} \right)}$ and $B= \left(\begin{array}{cc} 1 & w\\
         0 & 1\end{array} \right)$. 
 Then 
 \begin{equation}\eta (A^{\mathcal{S}}B^{\mathcal{S}})=\frac{q+3}{2}.
         \end{equation}
         
         (ii) Assume that $q\not\cong 1 \mod 4$. 
Set $E= \left(\begin{array}{cc} 1 & 1\\
         0 & 1\end{array} \right)$. Then
 \begin{equation}\eta (E^{\mathcal{S}}E^{\mathcal{S}})=\frac{q+3}{2}.
         \end{equation}
         
Hence, Theorem B is optimal.
         
\end{prop}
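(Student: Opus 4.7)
The plan is to combine Theorem B (which already supplies $\eta\ge (q+3)/2$) with a matching upper bound obtained by trace counting. Since $(A^C B^D)^{D^{-1}}=A^{CD^{-1}}B$, every element of $A^{\mathcal{S}}B^{\mathcal{S}}$ is conjugate to some $A^C B$, so the set of traces attained in the product equals $\{\trace(A^C B):C\in\mathcal{S}\}$. Using Lemma \ref{particularcases} (iv), this set is $\{2-wc^2:c\in\mathcal{F}\}$ in case (i) and $\{2-c^2:c\in\mathcal{F}\}$ in case (ii); as $c$ ranges over $\mathcal{F}$, Lemma \ref{odd} shows each has exactly $(q+1)/2$ elements. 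Since a non-central conjugacy class is determined by its trace outside the unipotent fibers, the problem reduces to analyzing the traces $\pm 2$.

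For case (i), the hypothesis $q\equiv 1\pmod 4$ makes $-1$ a square, so $-w$ is a non-square. Hence $wc^2=4$ has no solution and trace $-2$ is never attained. Trace $2$ occurs only at $c=0$, yielding matrices $\left(\begin{array}{cc}1 & w+d^2\\0 & 1\end{array}\right)$, which are never the identity since $d^2=-w$ has no solution. I will exhibit two witnesses: $d=0$ yields the non-square $w+d^2=w$, while $d=(w-1)/2$ yields $w+d^2=((w+1)/2)^2$, a nonzero square (well-defined because $w\neq\pm 1$, as $\pm 1$ are squares). By Lemma \ref{distinctoftypeiii} both type (iii) classes with $s=1$ are reached, contributing $2$ classes; together with the $(q-1)/2$ other traces, each carrying a unique (non-unipotent) class, this yields exactly $(q+3)/2$.

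For case (ii), $q\equiv 3\pmod 4$ so $-1$ is a non-square. Trace $2$ occurs only at $c=0$, giving $\left(\begin{array}{cc}1 & 1+d^2\\0 & 1\end{array}\right)$, never the identity. A cardinality argument shows $\{1+d^2:d\in\mathcal{F}\}$ consists of $(q+1)/2$ nonzero elements and therefore cannot lie in the $(q-1)/2$ nonzero squares; so it contains both the square $1$ (from $d=0$) and some non-square, whence both type (iii) classes with $s=1$ appear. Trace $-2$ is attained only at $c=\pm 2$; the resulting matrix $M$ has $(2,1)$-entry $-4$ and so is not $-I$. If $M$ were conjugate to $\left(\begin{array}{cc}-1 & u\\0 & -1\end{array}\right)$, then Lemma \ref{generalcase} (ii) would force $uc'^2=4$ for some $c'\in\mathcal{F}\setminus\{0\}$, making $u$ a square; by Lemma \ref{distinctoftypeiii} only the class with $u=1$ is reached. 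Hence trace $-2$ contributes one class, the $(q-3)/2$ remaining traces contribute one class each, and trace $2$ contributes two, for a total of $(q+3)/2$.

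The main obstacle is the rigidity argument at trace $-2$ in case (ii): although the fiber of $\mathcal{S}$ over trace $-2$ consists of three conjugacy classes ($-I$ and two type (iii) classes), the product $E^C E$ always has $(2,1)$-entry $-4$, a nonzero square, which forces the hypothetical conjugation parameter $u$ to be a square and so eliminates the non-square class from the product. This precise cancellation is what makes $(q+3)/2$ tight; the trace $2$ analysis and the counting at the remaining traces are more or less direct applications of Lemmas \ref{odd} and \ref{distinctoftypeiii}.
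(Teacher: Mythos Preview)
Your upper-bound argument is correct and, combined with Theorem~B, proves the proposition. Your route differs from the paper's in one structural respect: for traces different from $2$ the paper exhibits an explicit conjugating matrix $D=\left(\begin{smallmatrix}1 & (c_2d_2-c_1d_1)/c_1^2\\0 & 1\end{smallmatrix}\right)$ sending $A^{C_1}B$ to $A^{C_2}B$ whenever $c_1^2=c_2^2\neq 0$, which uniformly handles all traces $\neq 2$ (including trace $-2$ in case~(ii)) without any appeal to the classification of conjugacy classes. You instead invoke the fact that a non-central class in $\mathcal{S}$ with trace $t\neq\pm2$ is determined by $t$, and then treat trace $-2$ separately via the $(2,1)$-entry. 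The paper's device is shorter in case~(ii); yours requires the extra step but in return identifies precisely which unipotent class occurs at trace $-2$.

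There is one oversight worth flagging. When $c=0$, the determinant condition $ad-bc=1$ forces $d\neq 0$, so the pair $(c,d)=(0,0)$ does not arise from any $C\in\mathcal{S}$. Hence your witness $d=0$ (giving $w+d^2=w$ in~(i) and $1+d^2=1$ in~(ii)) is not admissible, and your direct verification that \emph{both} type~(iii) classes occur at trace $2$ is incomplete as written. This does not affect the result, since you explicitly rely on Theorem~B for the lower bound and only need the \emph{upper} bound $\eta\le(q+3)/2$; for that it suffices that the identity is excluded at trace $2$, which you correctly show via $-w$ (resp.\ $-1$) being a non-square. Your second witness $d=(w-1)/2$ in case~(i) is valid (since $w\neq 1$), so only the non-square witness there and both witnesses in case~(ii) would need repair if you wanted a self-contained exact count; the paper sidesteps this entirely by appealing to Theorem~B for the lower bound, as you also do.
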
 
\begin{proof}
  {\bf (i)} Let $C_i=\left(
\begin{array}{cc} a_i & b_i\\
         c_i & d_i\end{array} \right)\in \mathcal{S}$, for $i=1,2$. 
By Lemma \ref{generalcase} (ii),

   \begin{equation*}
     A^{C_i} B = \left(\begin{array}{cc} 1+c_id_i & {d_i}^2\\
         -{c_i}^2& 1-c_id_i\end{array} \right) \left(\begin{array}{cc} 1 & w\\
         0 & 1\end{array} \right)=\left(\begin{array}{cc} 1 + c_id_i & w+wc_id_i+{d_i}^2\\
         -c_i^2 & 1-c_id_i-w{c_i}^2\end{array} \right).
         \end{equation*}
  
  Hence, $\trace(A^{C_i} B) = 2 - w{c_i}^2$, which takes on $\frac{q+1}{2}$ values by Lemma \ref{odd}.
  Note that if two matrices $A^{C_1} B$ and $A^{C_2} B$ have the same trace, then ${c_1}^2 = {c_2}^2$.

  Suppose ${c_1}^2 = {c_2}^2\neq0$.  Let 
  $D=\left(\begin{array}{cc} 1 & \frac{c_2 d_2 - c_1 d_1}{{c_1}^2} \\ 0 & 1\end{array} \right).$ 
  
  Then $(A^{C_1} B)^D = A^{C_2} B$, and so, excluding trace $2$, each possible value of the trace is obtained by at most one conjugacy class.
         
  Suppose $c_1=c_2=0$.  Then $A^{C_i} B = \left(\begin{array}{cc} 1 & w+{d_i}^2\\
         0 & 1\end{array} \right)$.  Note that $w+{d_i}^2 \neq 0$ since $-w$ is non-square.  Let $z$ be a generator of the multiplicative group of units of $\mathcal{F}$, and suppose $w+{d_i}^2 = z^{n_i}$.  By Lemma \ref{distinctoftypeiii}, $A^{C_1} B$ and $A^{C_2} B$ are conjugate if and only if there is an $e \in \mathcal{F}\setminus\{0\}$ such that $z^{n_1} e^2 = z^{n_2}$, i.e. when $n_1$ and $n_2$ have the same parity.  Hence there are at most two conjugacy classes represented by matrices with trace $2$, and thus, $\eta (A^{\mathcal{S}}B^{\mathcal{S}}) \leq \left(\frac{q+1}{2} - 1\right) + 2 = \frac{q+3}{2}$.

 {\bf (ii)} Define $C_i$ as in (i).  Then we may proceed by replacing $w$ with $1$ in the argument for the previous case, since $-1$ is not a square in $\mathcal{F}$.  Thus, $\eta (E^{\mathcal{S}}E^{\mathcal{S}}) \leq \frac{q+3}{2}$.
 
Hence, in each case, the result follows by Theorem B. 
  \end{proof} 
  \end{section}

\end{document}